\DeclareMathOperator{\sign}{sign}
\theoremstyle{definition}
\newtheorem{proposition}{Proposition}[section]
\newtheorem{theorem}{Theorem}[section]
\newtheorem{lemma}{Lemma}[section]
\numberwithin{equation}{section}
\providecommand{\msc}[1]{\small	\textbf{2020 Mathematics Subject Classification:} #1}
\providecommand{\keywords}[1]
{\small	\textbf{\textit{Keywords:}} #1}
\title{On the distribution of the telegraph meander and its properties}
\author[1]{A. Pedicone}
\author[2]{E. Orsingher}
\affil[ ]{Department of Statistical Sciences, Sapienza University of Rome, Italy}
\affil[1]{andrea.pedicone@uniroma1.it}
\affil[2]{enzo.orsingher@uniroma1.it}
\date{}
\begin{document}
\maketitle
\begin{abstract}
In this paper we study the telegraph meander, a random function obtained by conditioning the telegraph process to stay above the zero level. The finite dimensional distribution of the telegraph meander is derived by applying the reflection principle for the telegraph process and the Markovianity of the telegraph process with the velocity process. We show that the law of the telegraph meander at the end time is a solution to a hyperbolic equation, and we find the characteristic function and moments of any order. Finally, we prove that Brownian meander is the weak limit of the telegraph meander.
\end{abstract}
\keywords{Telegraph process; conditional processes; Bessel functions; Brownian meander; conditioned limit theorems; weak convergence;hyperbolic equation}
\\
\msc{Primary 60K99, Secondary 60F17}
\section{Introduction and summary}
Let $(\Omega, \mathcal{F},\mathsf{P})$ be a probability space and $N = (N(s))_{s\geq 0}$ be a homogeneous Poisson process on it. The telegraph process $T = (T(t))_{t\geq 0}$ is defined as the integral over $[0,t]$ of the process $V = (V(s))_{s\geq 0}$, $V(s) = V(0)(-1)^{N(s)}$, where $V(0)$ is a random variable independent of $N$ and assuming values $\{-c,c\}$, with equal probability. Formally, we have
\begin{equation}\label{telegraph process}
T(t) = \int_0^tV(0)(-1)^{N(s)}ds \;\; t \geq 0.
\end{equation}
The telegraph process has been introduced by Goldstein \cite{G 1951} and is the simplest example of the so-called \textit{random evolutions} (see \cite{EK 1986} Chapter 12). This random process formalizes the motion of a particle moving  at constant velocity and randomly changing direction, forward and backward, at exponential times. The finiteness of both the speed of motion and the intensity of changes of directions per unit time makes the telegraph process suitable for describing real motions that emerge in different fields, like geology \cite{TDc 2018}, finance \cite{DcP 2002, R 2007}, and physics \cite{HM 2020, MADlB2012}. For this reason over the years many researchers studied the telegraph process and its extensions see, e.g., \cite{O 1990}, \cite{O 1995}, \cite{R 1999}, \cite{R 2021}, \cite{BNO 2001}, \cite{DGO 2004}, \cite{DgI 2021} ,\cite{Dc 2001}, \cite{C 2023}, \cite{CO 2021}.

By means of different approaches \cite{O 1990, DGO 2004}, the distribution of $T(t)$ can be computed and reads
\begin{align}\label{law of telegraph process}&\mathsf{P}\{T(t) \in dx\} = \frac{e^{-\lambda t}}{2}\big(\mathsf{1}_{\{ct\}}(x)+\mathsf{1}_{\{-ct\}}(x)\big)dx+p_{\lambda,c}(x,t)dx, \end{align}
where 
\begin{equation}
p_{\lambda,c}(x,t) = \frac{e^{-\lambda t}}{2c}\Big(\lambda I_0\big(\frac{\lambda}{c}\sqrt{c^2t^2-x^2}\big)+\frac{\partial}{\partial t}I_0\big(\frac{\lambda}{c}\sqrt{c^2t^2-x^2}\big)\Big)\mathsf{1}_{(-ct,ct)}(x),
\end{equation}
and $I_\nu$ is the modified Bessel function of the first kind of order $\nu \in \mathbb{R}$, $I_\nu(x) = \sum_{k\geq 0}\big(\frac{x}{2}\big)^{2k+\nu}\frac{1}{k!\Gamma(\nu +k+1)}$, $x \in \mathbb{C}$. 

Next, we define the telegraph meander $T^+ = (T^+(s),s \in [0,t])$, for some $t>0$, as the stochastic process having finite dimensional distributions
\begin{align}\label{finite dimensional distribution}
&\mathsf{P}\{T^+(t_1)\in A_1,...,T^+(t_n) \in A_n\} =\mathsf{P}\{T(t_1)\in A_1,...,T(t_n) \in A_n|\min_{0\leq s\leq t}T(s)\geq 0\}\nonumber\\
&=\int_{A_1}...\int_{A_n}\sum_{\substack{v_1,...,v_n\\\in \{-c,c\}}}\prod_{k=1}^nf_{v_{k-1}}(x_k-x_{k-1},v_k,-x_{k-1},t_k-t_{k-1})\frac{m_{v_n}(-x_n,t-t_n)}{m_c(0,t)}dx_1 \cdots dx_n, 
\end{align}
for $0=t_0<t_1<...<t_n<t$, $v_0=c$, $x_0=0$, $A_1,..., A_n \in \mathcal{B}(\mathbb{R})$, where the functions $f$ and $m$ are defined in \eqref{notation 1} and \eqref{notation 2} respectively and are written explicitly in Lemma \ref{joint conditioned distribution telegraph with minimum} and in equations \eqref{minimum positive} and \eqref{minimum negative}. Hence, the telegraph meander corresponds to a telegraph process conditioned to stay above the zero level on the whole time interval $[0,t]$. From equation \eqref{telegraph process} it clearly follows that 
\begin{equation}\label{continuity}
\mathsf{P}\{|T^{+}(t)-T^{+}(s)|\leq c|t-s|\}=1.
\end{equation}
Therefore, we can define the telegraph meander as the random element $T^+:\Omega \to C^+[0,t]$, where the codomain is the space of non-negative continuous functions on $[0,t]$ endowed with the sigma field generated by the topology of the uniform metric. 

The study of the telegraph meander is possible thanks to the recent results of Cinque in \cite{C 2023} and \cite{C 2020}, where the joint distribution of the telegraph process and its maximum is presented. In particular the author proved a reflection principle for finite velocity random motions which, in the case of the telegraph process, becomes
\begin{equation}\label{Negative reflection 1}
\mathsf{P}\{T(t) \in dx,\max_{0\leq s \leq t}T(s) \geq y|V(0) = c\} = \mathsf{P}\{T(t) \in 2y-dx|V(0) = -c\},
\end{equation}
for $x \in [2y-ct,y)$ and $y \in [0,ct)$. The reflection principle was inspired by a previous result obtained by Cinque and Orsingher in \cite{CO 2021} where surprisingly the following result emerged
\begin{equation}\label{law of maximum}\mathsf{P}\{\max_{0\leq s\leq t}T(s) \in dx|V(0) = c\} = 2\mathsf{P}\{T(t) \in dx\} \;\; x \in [0,ct).\end{equation}

Here we show that the absolutely continuous component of the telegraph meander at the end time $t$ is 
\begin{align}
&q_{\lambda,c}(x,t) =\frac{\mathsf{1}_{[0,ct)}(x)}{I_0(\lambda t)+I_1(\lambda t))}\bigg\{\frac{\lambda xI_0\big(\frac{\lambda}{c}\sqrt{c^2t^2-x^2}\big)}{c(ct+x)}+\frac{I_1(\frac{\lambda}{c}\sqrt{c^2t^2-x^2})}{\sqrt{c^2t^2-x^2}}\Big(\frac{\lambda x}{c}+\frac{ct-x}{ct+x}\Big)\bigg\}.
\end{align}
We remark that this formula can also be obtained as a particular case of a result stated in \cite{R 2021}, where the joint law of the asymmetric telegraph process at the end time $t$, with its minimum at zero, is derived.

As a consequence of \eqref{Negative reflection 1}, we are able to express $q_{\lambda,c}$ as limit of the form
\begin{equation}\label{limit definition of telegraph meander}
q_{\lambda,c}(x,t) = \lim_{\epsilon \downarrow 0}\mathsf{P}\{T(t) \in dx |\min_{0\leq s \leq t} T(s) \geq -\epsilon,V(0) = -c\}.
\end{equation}
and we see that the following relationship emerges
\begin{equation}\label{Telegraph meander t 2}q_{\lambda,c}(x,t) = -\frac{\frac{\partial}{\partial x}p^{-}_{\lambda,c}(x,t)}{p^{-}_{\lambda,c}(0,t)} \;\; x \in [0,ct),\end{equation} where $p^{-}_{\lambda,c}$ is the absolutely continuous component of the law of telegraph process with initial negative velocity (see \cite{O 1995}),
\begin{align}\label{p minus}
&p^{-}_{\lambda,c}(x,t) =\frac{\lambda e^{-\lambda t}}{2c}\bigg(\sqrt{\frac{ct- x}{ct+x}}I_1\big(\frac{\lambda}{c}\sqrt{c^2t^2-x^2}\big)+I_0\big(\frac{\lambda}{c}\sqrt{c^2t^2-x^2}\big)\bigg)\mathsf{1}_{(-ct,ct)}(x).
\end{align}

Result \eqref{Telegraph meander t 2} permits us to prove that $q_{\lambda,c}$ satisfies the following hyperbolic equation
\begin{equation}\label{Telegrphic meander's equation}
\frac{\partial^2 u}{\partial t^2}-c^2\frac{\partial^2 u}{\partial x^2}+2\lambda\frac{\partial u}{\partial  t}-\frac{2a}{t}\frac{\partial u}{\partial t}=\Big(\frac{\lambda}{t}-\frac{2a}{ t^2}\Big)u,
\end{equation}
where 
\begin{equation}
a(t) = \frac{I_1(\lambda t)}{I_0(\lambda t)+I_1(\lambda t)}.
\end{equation}
An interesting application of finite velocity random motions is that they can be used to represent the solution of hyperbolic equations. Indeed, it is a well known result that $p_{\lambda,c}$ satisfies the telegraph equation
\begin{equation}\label{Telegrapher's equation}
\frac{\partial^2u}{\partial t^2}-c^2\frac{\partial^2 u}{\partial x^2}+2\lambda\frac{\partial u}{\partial  t} = 0.
\end{equation}
Then, \eqref{Telegrphic meander's equation} has a probabilistic interpretation in that its solution corresponds to the law of a conditioned finite velocity random motion. 

In \cite{O 1990} it is proved that, under the Kac's condition, namely when $\lambda \to +\infty$, $c \to +\infty$ such that $c^2/\lambda \to 1$, the density of the telegraph process converges to the density of Brownian motion. Then, by introducing the parameter $\alpha = \lambda = c^2$, the telegraph process $T_{\alpha}$ forms a sequence of random functions on $C[0,+\infty)$, the space of real valued continuous functions on $[0,+\infty)$. In \cite{EK 1986}, p. 451, it is claimed that $T_{\alpha} \Rightarrow B$ on $C[0,+\infty)$, where $B$ is Brownian motion and the symbol $\Rightarrow$ stands for weak convergence. This result motivates us to prove the weak convergence of the telegraph meander to Brownian meander, that is $T_{\alpha}^+ \Rightarrow B^+$ on $C^+[0,t]$, where we are denoting $B^+$ as the random function on $C^+[0,t]$ corresponding to Brownian meander (see e.g. \cite{DI 1977} for the definition of Brownian meander). By using the asymptotic expansion of $I_\nu$ for every $\nu \in \mathbb{N}_0$, we first show a local limit theorem: \begin{equation}
\lim_{\alpha \to +\infty}\mathsf{P}\{T^+_{\alpha}(t_1) \in dx_1,...,T^+_{\alpha}(t_n) \in dx_n\} = \mathsf{P}\{B^+(t_1) \in dx_1,...,B^{+}(t_n) \in dx_n\},
\end{equation}
for every $n \in \mathbb{N}$, $x_1,...,x_n \in \mathbb{R}$, $0<t_1<...<t_n<t$. The proof of weak convergence of the probability measures induced by $T^+_\alpha$ is more involved. The main idea is based on the observation that the telegraph process can be seen as a random walk with a random number of summands. Hence, we can use the result of Iglehart, stated in \cite{I 1974}, concerning a functional central limit theorem for random walk conditioned to stay positive. 

The telegraph meander formalizes a finite velocity random motion constrained to be positive, with the property that the Brownian meander is its weak limit. These features suggest to use the telegraph meander as alternative model to Brownian meander. For instance, in finance, the Brownian meander is applied as a model for financial options where the payoff depends on whether the asset’s price reaches a barrier and remains continuously below it (see \cite{CJ 1997}). Then, just as the geometric telegraph process replaces geometric Brownian motion (see \cite{DcP 2002}), we propose the telegraph meander as an alternative to Brownian meander in this field. Furthermore, since the telegraph process models the vertical motion of volcanic regions (\cite{TDc 2018}), the telegraph meander can be used for studying this phenomenon under the condition that the motion stays above a specified level.

This paper is organized in the following manner. In Section $2$ we derive the finite dimensional distribution of the telegraph meander and study the law of the telegraph meander at end time $t$. Then, in Section $3$ we study some properties of the law of $T^+(t)$, namely the characteristic function, the hyperbolic equation governing the law and the moments. Section $4$ concerns the weak convergence of the telegraph meander to Brownian meander. Finally, in Section $5$ we derive the law of the telegraph meander at end time $t$, conditioned on the number of Poisson events, and study some of its properties. 
\section{Distribution of the telegraph meander}
In order to derive the finite dimensional distribution of the telegraph meander we need to recall some of the results contained in \cite{CO 2021}, \cite{C 2023} and \cite{C 2020}. 

By setting as $\pi_v(dx,t,n) = \mathsf{P}\{T(t) \in dx|N(t) = n,V(0) = v\}$, for $n \in \mathbb{N}_0$ and $v \in \{-c,c\}$, according to \cite{CO 2021}, we have that
\begin{equation}
\pi_v(dx,t,n) =  
\begin{cases}\label{Telegraph conditioned even}
\mathsf{1}_{\{vt\}}(x)dx & n=0 \\
\frac{(2k+1)!}{k!^2}\frac{(c^2t^2-x^2)^{k}}{(2ct)^{2k+1}}\mathsf{1}_{(-ct,ct)}(x)dx &n = 2k+1,\;k \in \mathbb{N}_0\\
\frac{(2k)!}{k!(k-1)!}\frac{(c^2t^2-x^2)^{k}}{(2ct)^{2k}(ct-\sign(v)x)}\mathsf{1}_{(-ct,ct)}(x)dx &n = 2k,\;k \in \mathbb{N}.
\end{cases}
\end{equation}
Now, the symmetry of the telegraph process implies that, for any $x,\beta \in \mathbb{R}$ and $v \in \{-c,c\}$,
\begin{align}\label{symmetry property}
&\mathsf{P}\{T(t) \in dx,\min_{0\leq s \leq t}T(s) \geq -\beta |V(0) = -v\} =\mathsf{P}\{T(t) \in -dx,\max_{0\leq s \leq t}T(s) \leq \beta |V(0) = v\}. 
\end{align}
Therefore, formula (3.1) contained in \cite{C 2023} can be rewritten, according to our notation, for $n \in \mathbb{N}$, as
\begin{align}
&\mathsf{P}\{T(t) \in dx,\min_{0\leq s \leq t}T(s)\geq -\beta|N(t) = n,V(0) = -c\} \nonumber\\&=\begin{cases}\label{joint neg vel}
0 &\beta <0\lor -x\\
\pi_{c}(-dx,t,n)-\pi_{-c}(dx+2\beta,t,n)& 0\lor -x\leq \beta <\frac{ct-x}{2}\\
\pi_{c}(-dx,t,n) & \beta \geq \frac{ct-x}{2}
\end{cases} \;\; n \in \mathbb{N},
\end{align}
where we used the property that $\pi_c(-dx,t,n) = \pi_{-c}(dx,t,n)$. Furthermore, formulas (3.35) and (3.44) of \cite{C 2020} become respectively
\begin{align}
&\mathsf{P}\{T(t) \in dx,\min_{0\leq s \leq t}T(s)\geq -\beta|N(t) = 2k,V(0) = c\} \nonumber\\
&=\begin{cases}
0 & \beta<0\lor -x\\
\pi_{-c}(-dx,t,2k)-\pi_{-c}(dx+2\beta ,t,2k)& 0\lor -x\leq \beta <\frac{ct-x}{2}\\
\pi_{-c}(-dx,t,2k) & \beta \geq \frac{ct-x}{2}
\end{cases} \;\; k \in \mathbb{N}\label{joint pos vel 1}\\
&\mathsf{P}\{T(t) \in dx,\min_{0\leq s \leq t}T(s)\geq -\beta|N(t) = 2k+1,V(0) = c\} \nonumber\\
&=\begin{cases}
0 & \beta<0\lor -x\\
\pi_{-c}(-dx,t,2k+1)-\frac{k}{k+1}\frac{ct-(2\beta+x)}{ct+(2\beta+x)}\pi_{-c}(dx+2\beta,t,2k+1)& 0\lor -x\leq \beta <\frac{ct-x}{2}\\
\pi_{-c}(dx,t,2k+1) & \beta \geq \frac{ct-x}{2}
\end{cases} \;\; k \in \mathbb{N}_0.\label{joint pos vel 2}
\end{align}
Clearly we have
\begin{align}
&\mathsf{P}\{T(s) = v't,\min_{0\leq s \leq t}T(s)\geq -\beta|N(t) = 0,V(0) = v\} =\begin{cases}
\mathsf{1}_{[0,+\infty)}(\beta)\mathsf{1}_{\{c\}}(v') &v=c\\
\mathsf{1}_{[ct,+\infty)}(\beta)\mathsf{1}_{\{-c\}}(v') &v=-c.
\end{cases}
\end{align}
In the next lemma we derive the joint distribution of telegraph process, its minimum and velocity at time $t>0$ with fixed initial velocity.
\begin{lemma}\label{lemma joint distribution telegraph minimum with velocity}
Let $T$ be the telegraph process. Then, the following holds
\begin{align}\label{joint telegraph velocity minimum 1}
&\mathsf{P}\{T(t) \in dx,V(t) = v,\min_{0\leq s \leq t}T(s) \geq -\beta|V(0) = v\} \\
&=\begin{cases}
0 &\beta<0\lor -x\\
\frac{\lambda e^{-\lambda t}}{2c}\Big\{\sqrt{\frac{vt+x}{vt-x}}I_1(\frac{\lambda}{c}\sqrt{c^2t^2-x^2})-\sqrt{\frac{ct-(2\beta+x)}{ct+(2\beta+x)}}I_1(\frac{\lambda}{c}\sqrt{c^2t^2-(2\beta+x)^2})\Big\}dx &0\lor -x\leq \beta <\frac{ct-x}{2}\\
\frac{\lambda e^{-\lambda t}}{2c}\sqrt{\frac{vt+x}{vt-x}}I_1(\frac{\lambda}{c}\sqrt{c^2t^2-x^2}) & \beta\geq \frac{ct-x}{2}, \nonumber
\end{cases}
\end{align}
for $x \in (-ct,ct)$ and $v \in\{-c,c\}$, while the singularities are given by
\begin{align}
&\mathsf{P}\{T(t) = ct,V(t) = c,\min_{0\leq s \leq t}T(s) \geq -\beta|V(0) = c\} = e^{-\lambda t}\mathsf{1}_{[0,+\infty)}(\beta),\label{joint telegraph minimum 4} \\
&\mathsf{P}\{T(t) = -ct,V(t) = -c,\min_{0\leq s \leq t}T(s) \geq -\beta|V(0) = -c\} = e^{-\lambda t}\mathsf{1}_{[ct,+\infty)}(\beta)\label{joint telegraph minimum 5}.
\end{align}
Moreover, for $x \in (-ct,ct)$
\begin{align}
&\mathsf{P}\{T(t) \in dx,V(t) = c,\min_{0\leq s \leq t}T(s) \geq -\beta|V(0) = -c\} \label{joint telegraph velocity minimum 2} \\
&=\begin{cases}
0 &\beta<0\lor -x\\
\frac{\lambda e^{-\lambda t}}{2c}\big(I_0\big(\frac{\lambda}{c}\sqrt{c^2t^2-x^2}\big)-I_0(\frac{\lambda}{c}\sqrt{c^2t^2-(2\beta+x)^2})\big)dx & 0\lor -x\leq \beta <\frac{ct-x}{2}\\
\frac{\lambda e^{-\lambda t}}{2c}I_0\big(\frac{\lambda}{c}\sqrt{c^2t^2-x^2}\big)dx& \beta\geq \frac{ct-x}{2} \nonumber
\end{cases} \\
&\mathsf{P}\{T(t) \in dx,V(t) = -c,\min_{0\leq s \leq t}T(s) \geq -\beta|V(0) = c\} =  \label{joint telegraph velocity minimum 3}\\
&=\begin{cases}
0 &\beta<0\lor -x\\
\frac{\lambda e^{-\lambda t}}{2c}\big(I_0\big(\frac{\lambda}{c}\sqrt{c^2t^2-x^2}\big)-\frac{ct-(2\beta+x)}{ct+(2\beta+x)}I_2(\frac{\lambda}{c}\sqrt{c^2t^2-(2\beta+x)^2})\big)dx & 0\lor -x\leq \beta <\frac{ct-x}{2}\\
\frac{\lambda e^{-\lambda t}}{2c}I_0\big(\frac{\lambda}{c}\sqrt{c^2t^2-x^2}\big)dx &\beta \geq \frac{ct-x}{2}.\nonumber
\end{cases}
\end{align}
\end{lemma}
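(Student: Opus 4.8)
The plan is to condition on the number $N(t)$ of Poisson events and exploit the identity $V(t) = V(0)(-1)^{N(t)}$, which shows that, once $V(0)$ is fixed, the event $\{V(t) = v\}$ coincides with the event that $N(t)$ has a prescribed parity. By the law of total probability,
\begin{align*}
&\mathsf{P}\{T(t) \in dx, V(t) = v, \min_{0\leq s \leq t}T(s) \geq -\beta \mid V(0) = v'\}\\
&\qquad = \sum_{n}\mathsf{P}\{T(t) \in dx, \min_{0\leq s \leq t}T(s) \geq -\beta \mid N(t) = n, V(0) = v'\}\,\frac{e^{-\lambda t}(\lambda t)^{n}}{n!},
\end{align*}
the sum running over even $n$ when $v = v'$ and over odd $n$ when $v = -v'$. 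For $x \in (-ct,ct)$ the term $n = 0$ is an atom sitting at $\pm ct$ and contributes nothing, so in each of the four cases one is left with a genuine power series in the index $k$.

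The conditional building blocks are already available. For $v = v' = c$ I would insert \eqref{joint pos vel 1} with $n = 2k$, $k\geq 1$; for $v = v' = -c$ I would insert \eqref{joint neg vel} with $n = 2k$; for $v = c$, $v' = -c$ I would insert \eqref{joint neg vel} with $n = 2k+1$; and for $v = -c$, $v' = c$ I would insert \eqref{joint pos vel 2} with $n = 2k+1$. In every instance the $\pi$-factors are the explicit polynomials \eqref{Telegraph conditioned even}; cancelling $(2k)!$ or $(2k+1)!$ against the Poisson weight and collecting the power $\left(\tfrac{\lambda}{2c}\sqrt{c^2t^2-x^2}\right)^{2k}$ (with $x$ replaced by $2\beta+x$ in the reflected term), the resulting series reduce to one of $\sum_{k\geq 0}(z/2)^{2k}/(k!)^2 = I_0(z)$, $\sum_{k\geq 1}(z/2)^{2k}/(k!\,(k-1)!) = \tfrac{z}{2}I_1(z)$, or $\sum_{k\geq 1}(z/2)^{2k}/((k+1)!\,(k-1)!) = I_2(z)$, with $z = \tfrac{\lambda}{c}\sqrt{c^2t^2-x^2}$, respectively $z' = \tfrac{\lambda}{c}\sqrt{c^2t^2-(2\beta+x)^2}$. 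The elementary identities $\sqrt{c^2t^2-x^2}\,/\,(ct\mp x) = \sqrt{(ct\pm x)/(ct\mp x)}$ then turn the $I_1$-series into the factors $\sqrt{(vt+x)/(vt-x)}$ and $\sqrt{(ct-(2\beta+x))/(ct+(2\beta+x))}$ of \eqref{joint telegraph velocity minimum 1}, while the $I_0$- and $I_2$-series directly produce \eqref{joint telegraph velocity minimum 2} and \eqref{joint telegraph velocity minimum 3}.

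The three-part split on $\beta$ is inherited verbatim from \eqref{joint neg vel}--\eqref{joint pos vel 2}: the range $\beta < 0 \lor -x$ is impossible since $T(0) = 0 \geq -\beta$ forces $\beta \geq 0$ and $T(t) = x \geq -\beta$ forces $\beta \geq -x$; and when $\beta \geq \tfrac{ct-x}{2}$ one has $2\beta+x \geq ct$, so the indicator $\mathsf{1}_{(-ct,ct)}(2\beta+x)$ kills the reflected $\pi$-term and only the first series survives. The two singularities \eqref{joint telegraph minimum 4}--\eqref{joint telegraph minimum 5} correspond to $N(t) = 0$: the particle then moves along a straight line, so $T(t) = v't$, $V(t) = V(0)$, and $\min_{0\leq s\leq t}T(s)$ equals $0$ if $v' = c$ and $-ct$ if $v' = -c$; multiplying the corresponding indicator by $\mathsf{P}\{N(t) = 0\} = e^{-\lambda t}$ yields the stated expressions.

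The step I expect to require the most care is the identification of the $I_2$-series in \eqref{joint telegraph velocity minimum 3}: there the weight $\tfrac{k}{k+1}$ coming from \eqref{joint pos vel 2} must first be absorbed into the factorials via $\tfrac{k}{k+1}\cdot\tfrac{1}{(k!)^2} = \tfrac{1}{(k+1)!\,(k-1)!}$, after which the sum (now starting at $k = 1$, the $k = 0$ term having vanished) is recognized as $I_2(z')$; one must also check that the resulting coefficient $\tfrac{ct-(2\beta+x)}{ct+(2\beta+x)}$ matches the one carried along from \eqref{joint pos vel 2}. Everything else is bookkeeping: matching the parity of $N(t)$ to $V(t)$, cancelling factorials against Poisson weights, and tracking which argument — $x$ or $2\beta+x$ — enters each modified Bessel function.
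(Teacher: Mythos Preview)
Your proposal is correct and follows essentially the same approach as the paper: condition on $N(t)$, use the parity of $N(t)$ to fix $V(t)$, insert the appropriate one among \eqref{joint neg vel}--\eqref{joint pos vel 2}, and sum the resulting series into $I_0$, $I_1$, or $I_2$. The paper merely packages the three summation identities you describe as separate displayed formulas before invoking them, and singles out the $I_2$ case (your $\tfrac{k}{k+1}$ absorption) as its own identity, but the logic and computations are identical to yours.
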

\begin{proof}
Results \eqref{joint telegraph minimum 4} and \eqref{joint telegraph minimum 5} are trivial. By using the definition of the $I_\nu$ and after simple steps, the following formulas can be obtained
\begin{align}
&\sum_{k \geq 1}\pi_{v}(dx,t,2k)\mathsf{P}\{N(t) = 2k\} = \frac{\lambda e^{-\lambda t}}{2c}\sqrt{-\frac{x+vt}{x-vt}}I_1\big(\frac{\lambda }{c}\sqrt{c^2t^2 -x^2}\big)dx,\label{joint telegraph and velocitiy 1}\\
&\sum_{k \geq 0}\pi_{v}(dx,t,2k+1)\mathsf{P}\{N(t) = 2k+1\} = \frac{\lambda e^{-\lambda t}}{2c}I_0\big(\frac{\lambda }{c}\sqrt{c^2t^2 -x^2}\big)dx,\label{joint telegraph and velocity 2}\\
&\sum_{k \geq 0}\frac{k}{k+1}\frac{ct-x}{ct+x}\pi_{-c}(dx,t,2k+1)\mathsf{P}\{N(t) = 2k+1\}= \frac{\lambda e^{-\lambda t}}{2c}\frac{ct-x}{ct+x}I_2(\frac{\lambda}{c}\sqrt{c^2t^2-x^2})dx,
\label{joint telegraph and velocity 3}
\end{align}
where $x \in (-ct,ct)$. Now, the velocity at the end time $t$ coincides with the initial velocity if there is an even number of changes of directions, so
\begin{align*}
&\mathsf{P}\{T(t) \in dx,V(t) = v,\min_{0\leq s \leq t}T(s) \geq -\beta|V(0) = v\} \\
&= \sum_{k\geq 0}\mathsf{P}\{T(t) \in dx,\min_{0\leq s \leq t}T(s) \geq -\beta|N(t) = 2k,V(0) = v\}\mathsf{P}\{N(t) = 2k\}.
\end{align*}
By using first \eqref{joint neg vel} for the even case and \eqref{joint pos vel 1}, by applying then \eqref{joint telegraph and velocitiy 1}, we arrive at \eqref{joint telegraph velocity minimum 1}. The proof of \eqref{joint telegraph velocity minimum 2} and \eqref{joint telegraph velocity minimum 3} works in the same way, using the fact that the velocity at the end time differs from the initial velocity if there is an odd number of changes of directions. This allows us to exploit \eqref{joint neg vel}, \eqref{joint pos vel 2}, and the formulas \eqref{joint telegraph and velocity 2}, \eqref{joint telegraph and velocity 3}. 
\end{proof}
The symmetry of $T$ implies also that the distribution of minimum can be obtained from the distribution of maximum. Hence, formulas contained in \cite{CO 2021} can be restated as
\begin{align}\label{minimum positive}
&\mathsf{P}\{\min_{0\leq s \leq t}T(s) \in dx |V(0) = c\} = e^{-\lambda t}(I_0(\lambda t)+I_1(\lambda t))\mathsf{1}_{\{0\}}(x)dx \nonumber\\
&+e^{-\lambda t}\bigg\{\frac{\lambda t}{ct-x}I_0\big(\frac{\lambda}{c} \sqrt{c^2t^2 -x^2}\big)+\sqrt{\frac{ct+x}{ct-x}}\Big(\frac{\lambda}{c}-\frac{1}{ct-x}\Big)I_1(\frac{\lambda }{c}\sqrt{c^2t^2 -x^2})\bigg\}\mathsf{1}_{[-ct,0)}(x)dx\\
&\mathsf{P}\{\min_{0\leq s \leq t}T(s) \in dx |V(0) = -c\} \nonumber\\&=e^{-\lambda t}\mathsf{1}_{\{-ct\}}(x)dx +\frac{e^{-\lambda t}}{c}\Big(\lambda I_0\big(\frac{\lambda}{c}\sqrt{c^2t^2-x^2}\big)+\frac{\partial}{\partial t}I_0\big(\frac{\lambda}{c}\sqrt{c^2t^2-x^2})\Big)\mathsf{1}_{(-ct,0]}(x)dx\label{minimum negative}
\end{align}
We have now all the necessary tools to obtain the finite dimensional distribution of telegraph meander. To simplify the notation we set
\begin{align}\label{notation 1}
&\mathsf{P}\{T(t) \in dx,V(t) =v,\min_{0\leq s\leq t}T(s) \geq \beta|V(0) = v'\} = f_{v'}(x,v,\beta,t)dx,\\
&\mathsf{P}\{\min_{0\leq s\leq t}T(s) \geq \beta|V(0) = v\} = m_{v}(\beta,t).\label{notation 2}
\end{align}
\begin{theorem}
Let $T$ be a telegraph process. Then, the following holds
\begin{align}\label{fdd}
&\mathsf{P}\{T(t_1) \in dx_1,...,T(t_n) \in dx_n|\min_{0\leq s\leq t}T(s) \geq 0\}\nonumber\\
&=\sum_{\substack{v_1,...,v_n\\\in \{-c,c\}}}\prod_{k=1}^nf_{v_{k-1}}(x_k-x_{k-1},v_k,-x_{k-1},t_k-t_{k-1})dx_k\frac{m_{v_n}(-x_n,t-t_n)}{m_c(0,t)}. 
\end{align}
where $0=t_0<t_1<...<t_n<t$, $x_0 = 0$, $v_0 = c$.
\end{theorem}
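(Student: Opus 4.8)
The plan is to reduce the conditional law to a computation under the event $\{V(0)=c\}$ and then to factor the joint distribution over the partition $0=t_0<t_1<\dots<t_n<t_{n+1}:=t$ using the Markov property of the vector process $(T(s),V(s))_{s\ge 0}$, which is the key structural ingredient: the velocity is exactly the component one must adjoin to $T$ to obtain a Markov process, and this is why the functions $f$ in \eqref{notation 1} carry the additional velocity argument. First I would observe that, since $T(0)=0$, on $\{V(0)=-c\}$ one has $T(s)=-cs<0$ for every $s$ below the first Poisson epoch, so $\mathsf{P}\{\min_{0\le s\le t}T(s)\ge 0,\,V(0)=-c\}=0$ for $t>0$. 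Writing the conditional probability as the ratio of $\mathsf{P}\{T(t_1)\in dx_1,\dots,T(t_n)\in dx_n,\ \min_{0\le s\le t}T(s)\ge 0\}$ over $\mathsf{P}\{\min_{0\le s\le t}T(s)\ge 0\}$ and conditioning on $V(0)$ (uniform on $\{-c,c\}$ and independent of $N$), both numerator and denominator equal $\tfrac12$ times their analogues conditioned on $\{V(0)=c\}$; the factors $\tfrac12$ cancel, and by \eqref{notation 2} the denominator becomes $m_c(0,t)$.

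Next I would insert the intermediate velocities by the law of total probability, rewriting the numerator given $\{V(0)=c\}$ as
\[
\sum_{\substack{v_1,\dots,v_n\\\in\{-c,c\}}}\mathsf{P}\{T(t_1)\in dx_1,V(t_1)=v_1,\dots,T(t_n)\in dx_n,V(t_n)=v_n,\ \min_{0\le s\le t}T(s)\ge 0\mid V(0)=c\},
\]
and decomposing the global constraint as $\{\min_{0\le s\le t}T(s)\ge 0\}=\bigcap_{k=1}^{n+1}\{\min_{t_{k-1}\le s\le t_k}T(s)\ge 0\}$. Since each $\min_{t_{k-1}\le s\le t_k}T(s)$ is a functional of the path restricted to $[t_{k-1},t_k]$, the Markov property of $(T,V)$ at the times $t_1,\dots,t_n$ renders the $n+1$ segments conditionally independent given the endpoint data $(T(t_k),V(t_k))$, $k=0,\dots,n$; by temporal homogeneity together with the spatial homogeneity of $T$ (immediate from \eqref{telegraph process}), the segment on $[t_{k-1},t_k]$ issued from $(x_{k-1},v_{k-1})$ has the law of $(x_{k-1}+\widetilde T(\cdot),\widetilde V(\cdot))$ for a telegraph pair with $\widetilde V(0)=v_{k-1}$, so that $\{\min_{t_{k-1}\le s\le t_k}T(s)\ge 0\}$ becomes $\{\min_{0\le u\le t_k-t_{k-1}}\widetilde T(u)\ge -x_{k-1}\}$. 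For $k=1,\dots,n$, using $x_0=0$ and $v_0=c$ for the first factor, this produces $f_{v_{k-1}}(x_k-x_{k-1},v_k,-x_{k-1},t_k-t_{k-1})\,dx_k$ by \eqref{notation 1}, while the last segment $[t_n,t]$, on which only the minimum is constrained and the endpoint is unconstrained, contributes $m_{v_n}(-x_n,t-t_n)$ by \eqref{notation 2}. Summing over $v_1,\dots,v_n$ and dividing by $m_c(0,t)$ yields exactly \eqref{fdd}.

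The step I expect to be the main obstacle is making this Markov factorization rigorous: one must establish that $(T(s),V(s))_{s\ge 0}$ is a time-homogeneous Markov process, that conditioning on the finitely many values $(T(t_k),V(t_k))$ decouples the trajectory into independent pieces, and that each such piece is a space-time shifted copy of the telegraph pair — these facts justify carrying the product through the partition and, in particular, interpreting the intermediate $f$-factors (which constrain both the endpoint position and velocity) and the final $m$-factor (which constrains only the running minimum). A minor technical caveat concerns the singular part: the joint laws $f_{v'}$ carry atoms on $\{x=\pm ct\}$ (cf. \eqref{joint telegraph minimum 4}--\eqref{joint telegraph minimum 5}), so \eqref{fdd} is to be read as the density of the absolutely continuous component on $\{-c(t_k-t_{k-1})<x_k-x_{k-1}<c(t_k-t_{k-1}),\ k=1,\dots,n\}$, the degenerate configurations forming a lower-dimensional set that can be recorded separately if desired.
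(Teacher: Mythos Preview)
Your proposal is correct and follows essentially the same approach as the paper: both reduce to $\{V(0)=c\}$ via the inclusion $\{\min_{0\le s\le t}T(s)\ge 0\}\subseteq\{V(0)=c\}$, insert the intermediate velocities, decompose the minimum constraint over the subintervals $[t_{k-1},t_k]$, and then factor using the time-homogeneous Markov property of $(T,V)$ together with spatial homogeneity. The paper simply cites \cite{EK 1986}, p.~469, for the Markov property you flag as the main technical point, and does not discuss the singular part separately.
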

\begin{proof}
According to \cite{EK 1986}, p. 469,  the process $(T,V)$ is time homogeneous Markov process. This allows us to write, for every $n \in \mathbb{N}$, $0=t_0<t_1<...<t_n$ and $x_1,...,x_n \in \mathbb{R}$,
\begin{align*}
&\mathsf{P}\{T^+(t_1)\in dx_1,...,T^+(t_n) \in dx_n\}\\
&= \sum_{\substack{v_1,...,v_n\\\in \{-c,c\}}}\mathsf{P}\{T(t_1)\in dx_1,V(t_1) = v_1,...,T(t_n) \in dx_n,V(t_n) = v_n|\min_{0\leq s\leq t}T(s)\geq 0\}\\
&=\sum_{\substack{v_1,...,v_n\\\in \{-c,c\}}}\frac{\mathsf{P}\{\bigcap_{k=1}^n \{T(t_k)\in dx_k,V(t_k) = v_k,\min_{t_{k-1}\leq s\leq t_k}T(s)\geq 0\},\min_{t_n\leq s\leq t}T(s)\geq 0\}}{\mathsf{P}\{\min_{0\leq s\leq t}T(s)\geq 0\}}\\
&=\sum_{\substack{v_1,...,v_n\\\in \{-c,c\}}}\frac{\mathsf{P}\{\min_{t_n\leq s\leq t}T(s)\geq 0|T(t_n) = x_n,V(t_n) = v_n\}}{\mathsf{P}\{\min_{0\leq s\leq t}T(s)\geq 0|V(0) = c\}}\\&\mathsf{P}\{T(t_1)\in dx_1,V(t_1) = v_1,\min_{0\leq s\leq t_1}T(s)\geq 0|V(0) = c\}\\
&\prod_{k=2}^{n}\mathsf{P}\{T(t_k)\in dx_k,V(t_k) = v_k,\min_{t_{k-1}\leq s\leq t_k}T(s)\geq 0|T(t_{k-1}) = x_{k-1},V(t_{k-1}) = v_{k-1}\},
\end{align*}
where we used that $\{\min_{0\leq s \leq t} T(s) \geq 0\} \subseteq \{V(0) = c\}$. Finally we make use of the time homogeneity and the fact that $T(0) = 0$ $a.s.$ to get
\begin{align*}
&\mathsf{P}\{T^+(t_1)\in dx_1,...,T^+(t_n) \in dx_n\}\\
&=\sum_{\substack{v_1,...,v_n\\\in \{-c,c\}}}\frac{\mathsf{P}\{\min_{0\leq s\leq t-t_n}T(s)\geq -x_n|V(0) = v_n\}}{\mathsf{P}\{\min_{0\leq s\leq t}T(s)\geq 0|V(0) = c\}}\mathsf{P}\{T(t_1)\in dx_1,V(t_1) = v_1,\min_{0\leq s\leq t_1}T(s)\geq 0|V(0) = c\}\\
&\prod_{k=2}^{n}\mathsf{P}\{T(t_k-t_{k-1})\in dx_k-x_{k-1},V(t_k-t_{k-1}) = v_k,\min_{0\leq s\leq t_k-t_{k-1}}T(s)\geq -x_{k-1}|V(0) = v_{k-1}\}.
\end{align*}
\end{proof}
As a result of Theorem \ref{fdd}, Kolmogorov's existence theorem and the  continuity property \eqref{continuity} allow us to define $T^+$ as the stochastic process on $C^+[0,t]$ having finite dimensional distribution
\begin{align}\label{fdds}
&\mathsf{P}\{T^+(t_1)\in A_1,...,T^+(t_n) \in A_n\}= \mathsf{P}\{T(t_1)\in A_1,...,T(t_n) \in A_n|\min_{0\leq s\leq t}T(s)\geq 0\},
\end{align}
for $A_1,...,A_n \in \mathcal{B}(\mathbb{R})$.

We now examine the law of the telegraph meander at the end time $t$. By looking first at \eqref{joint telegraph minimum 4}, then at formulas \eqref{joint telegraph velocity minimum 1} when $v=c$ and  \eqref{joint telegraph velocity minimum 3}, setting $\beta = 0$ and adding up, we obtain
\begin{align}\label{joint distribution telegraph process and minimum}
&\mathsf{P}\{T(t) \in dx,\min_{0\leq s\leq t}T(s) \geq 0|V(0) = c\} = \nonumber\\
&=e^{-\lambda t}\mathsf{1}_{\{ct\}}(x)dx+\frac{\lambda e^{-\lambda t}}{2c}\bigg\{\frac{2xI_1(\frac{\lambda}{c}\sqrt{c^2t^2-x^2})}{\sqrt{c^2t^2-x^2}}+I_0\big(\frac{\lambda}{c}\sqrt{c^2t^2-x^2}\big)-\frac{ct-x}{ct+x}I_2(\frac{\lambda}{c}\sqrt{c^2t^2-x^2})\bigg\}\mathsf{1}_{[0,ct)}(x)dx\nonumber\\
&=e^{-\lambda t}\bigg\{\mathsf{1}_{\{ct\}}(x)+\bigg[\frac{\lambda xI_0\big(\frac{\lambda}{c}\sqrt{c^2t^2-x^2}\big)}{c(ct+x)}+\frac{I_1(\frac{\lambda}{c}\sqrt{c^2t^2-x^2})}{\sqrt{c^2t^2-x^2}}\Big(\frac{\lambda x}{c}+\frac{ct-x}{ct+x}\Big)\bigg]\mathsf{1}_{[0,ct)}(x)\bigg\}dx,
\end{align}
where in the last step we used the recurrence formula for Bessel function
\begin{equation}\label{properties Bessel}
I_{\nu+1}(x) = I_{\nu-1}(x)-\frac{2\nu}{x}I_\nu(x).
\end{equation}
Moreover, if the initial velocity is positive, the event $\{\min_{0\leq s\leq t}T(s)>0\}$ has null probability, then by using \eqref{minimum positive}
\begin{align}\label{Minimum non negative}
&\mathsf{P}\{\min_{0\leq s \leq t}T(s) = 0 |V(0) = c\} = \mathsf{P}\{\min_{0\leq s \leq t}T(s) \geq 0 |V(0) = c\} = e^{-\lambda t}(I_0(\lambda t)+I_1(\lambda t)).
\end{align}
Therefore, from \eqref{joint distribution telegraph process and minimum} and \eqref{Minimum non negative}, we can write
\begin{align}\label{Telegraph meander t 3}
&\mathsf{P}\{T^+(t) \in dx\} = \frac{\mathsf{P}\{T(t) \in dx,\min_{0\leq s\leq t}T(s) \geq 0|V(0) = c\}}{\mathsf{P}\{\min_{0\leq s\leq t}T(s) \geq 0|V(0) = c\}}\nonumber\\
&=\frac{\mathsf{1}_{\{ct\}}(x)dx}{I_0(\lambda t)+I_1(\lambda t)}+\frac{\mathsf{1}_{[0,ct)}(x)dx}{I_0(\lambda t)+I_1(\lambda t)}\bigg\{\frac{\lambda xI_0\big(\frac{\lambda}{c}\sqrt{c^2t^2-x^2}\big)}{c(ct+x)}+\frac{I_1(\frac{\lambda}{c}\sqrt{c^2t^2-x^2})}{\sqrt{c^2t^2-x^2}}\Big(\frac{\lambda x}{c}+\frac{ct-x}{ct+x}\Big)\bigg\}.
\end{align}

In the next theorem, by making use of the reflection principle for the telegraph process, we show that $q_{\lambda,c}$ (the absolutely continuous part of the law of the telegraph meander at time $t$), can be written as the limit of probabilities conditioned on events of the form ${\{\min_{0\leq s \leq t} T(s)\geq -\epsilon,V(0) = -c\}}$, for $\epsilon \downarrow 0$. By computing this limit, we can express $q_{\lambda,c}$ in terms of the derivative of the absolutely continuous part of the distribution of the telegraph process that starts with an initial negative velocity. Our construction mimics the strategy that Durrett \textit{et al.} followed for the derivation of the law of Brownian meander. We emphasize that this approach is possible thanks to the reflection principle, which by symmetry, can be stated as 
\begin{align}\label{Negative reflection}
&\mathsf{P}\{T(t) \in dx, \min_{0\leq s \leq t} T(s) < -\epsilon |V(0) = -c\} = \mathsf{P}\{T(t) \in 2\epsilon +dx|V(0) = -c\},
\end{align}
for $x \in[-\epsilon,ct-2\epsilon)$ and $\epsilon \in [0,ct)$. Result \eqref{Negative reflection}, beyond its usefulness, has an important meaning because it attributes to the telegraph process a typical property of Brownian motion. 
\begin{theorem}\label{Theorem telegraph meander t}
Let $T^+ = (T^+(s))_{s \in [0,t]}$ be a telegraph meander. Then, the absolutely continuous component of the law of $T^+(t)$ admits the following representation 
\begin{align}\label{Telegraph meander t 4}
&q_{\lambda,c}(x,t) = \lim_{\epsilon \downarrow 0}\mathsf{P}\{T(t) \in dx \lvert \min_{0\leq s \leq t} T(s) \geq -\epsilon,V(0) = -c\} =-\frac{\frac{\partial}{\partial x}p^{-}_{\lambda,c}(x,t)}{p_{\lambda,c}^{-}(0,t)} \;\; x \in [0,ct).
\end{align}
\end{theorem}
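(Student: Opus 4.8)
The plan is to follow the route of Durrett, Iglehart and Miller for the Brownian meander, with the reflection principle \eqref{Negative reflection} playing the role that the classical reflection principle plays there. Fix $x\in[0,ct)$ and take $\epsilon>0$ small enough that $\epsilon<\tfrac{ct-x}{2}$, so that \eqref{Negative reflection} applies at $x$. Writing
\begin{equation*}
\mathsf{P}\{T(t)\in dx\mid \min_{0\le s\le t}T(s)\ge-\epsilon,\,V(0)=-c\}=\frac{\mathsf{P}\{T(t)\in dx,\ \min_{0\le s\le t}T(s)\ge-\epsilon\mid V(0)=-c\}}{\mathsf{P}\{\min_{0\le s\le t}T(s)\ge-\epsilon\mid V(0)=-c\}},
\end{equation*}
I would rewrite the numerator as $\mathsf{P}\{T(t)\in dx\mid V(0)=-c\}-\mathsf{P}\{T(t)\in dx,\ \min_{0\le s\le t}T(s)<-\epsilon\mid V(0)=-c\}$ and apply \eqref{Negative reflection} to the last term, which equals $\mathsf{P}\{T(t)\in 2\epsilon+dx\mid V(0)=-c\}$. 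Since $x$ and $x+2\epsilon$ both lie in $(-ct,ct)$, the atom of the law of $T(t)$ at $\pm ct$ is irrelevant and the numerator is $\bigl(p^{-}_{\lambda,c}(x,t)-p^{-}_{\lambda,c}(x+2\epsilon,t)\bigr)\,dx$.

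For the denominator, note that $T(0)=0$ together with $V(0)=-c$ forces $\min_{0\le s\le t}T(s)\in[-ct,0]$ almost surely, so by \eqref{minimum negative}
\begin{equation*}
\mathsf{P}\{\min_{0\le s\le t}T(s)\ge-\epsilon\mid V(0)=-c\}=\int_{-\epsilon}^{0}\frac{e^{-\lambda t}}{c}\Bigl(\lambda I_0\bigl(\tfrac{\lambda}{c}\sqrt{c^2t^2-y^2}\bigr)+\tfrac{\partial}{\partial t}I_0\bigl(\tfrac{\lambda}{c}\sqrt{c^2t^2-y^2}\bigr)\Bigr)\,dy=\int_{-\epsilon}^{0}2\,p_{\lambda,c}(y,t)\,dy .
\end{equation*}
The integrand is continuous at $y=0$ and $p_{\lambda,c}(0,t)=p^{-}_{\lambda,c}(0,t)$, so the denominator is asymptotically $2\epsilon\,p^{-}_{\lambda,c}(0,t)$; the numerator is asymptotically $-2\epsilon\,\tfrac{\partial}{\partial x}p^{-}_{\lambda,c}(x,t)\,dx$ since $p^{-}_{\lambda,c}(\cdot,t)$ is smooth on $(-ct,ct)$ (Bessel functions being entire). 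Taking $\epsilon\downarrow0$ therefore resolves the $0/0$ indeterminacy and yields
\begin{equation*}
\lim_{\epsilon\downarrow0}\mathsf{P}\{T(t)\in dx\mid \min_{0\le s\le t}T(s)\ge-\epsilon,\,V(0)=-c\}=-\frac{\tfrac{\partial}{\partial x}p^{-}_{\lambda,c}(x,t)}{p^{-}_{\lambda,c}(0,t)},
\end{equation*}
which is the middle equality, giving in particular \eqref{Telegraph meander t 2}.

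It then remains to check that $-\tfrac{\partial}{\partial x}p^{-}_{\lambda,c}(x,t)/p^{-}_{\lambda,c}(0,t)$ coincides with $q_{\lambda,c}(x,t)$ as already computed in \eqref{Telegraph meander t 3}. For this I would differentiate \eqref{p minus} by the chain rule, writing $\sqrt{\tfrac{ct-x}{ct+x}}\,I_1(\tfrac{\lambda}{c}\sqrt{c^2t^2-x^2})=(ct-x)\,\tfrac{I_1(\tfrac{\lambda}{c}\sqrt{c^2t^2-x^2})}{\sqrt{c^2t^2-x^2}}$ and using $\tfrac{d}{dz}\bigl(z^{-\nu}I_\nu(z)\bigr)=z^{-\nu}I_{\nu+1}(z)$; the recurrence \eqref{properties Bessel} then eliminates $I_2$, and collecting the $I_0$ and $I_1$ contributions over the common denominator $ct+x$ and dividing by $p^{-}_{\lambda,c}(0,t)=\tfrac{\lambda e^{-\lambda t}}{2c}\bigl(I_0(\lambda t)+I_1(\lambda t)\bigr)$ reproduces the expression in \eqref{Telegraph meander t 3}. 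I expect the substantive difficulty to be entirely in the verification rather than the conception: keeping the reflection principle within its admissible range $x\in[-\epsilon,ct-2\epsilon)$, making sure both rescaled quantities have finite limits with the limiting denominator nonzero so that the ratio of limits is the limit of the ratio, and pushing the Bessel-function algebra of the last step through cleanly.
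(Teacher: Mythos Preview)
Your proposal is correct and follows essentially the same route as the paper: write the conditional probability as a ratio, use the reflection principle \eqref{Negative reflection} to express the numerator as $p^{-}_{\lambda,c}(x,t)-p^{-}_{\lambda,c}(x+2\epsilon,t)$, identify the denominator with $2\int_0^\epsilon p_{\lambda,c}(y,t)\,dy$, pass to the limit via first-order expansions, and then verify by direct differentiation of \eqref{p minus} that the result matches \eqref{Telegraph meander t 3}. The only cosmetic differences are that you invoke \eqref{minimum negative} directly for the denominator rather than the symmetrized form of \eqref{law of maximum}, and you organize the Bessel differentiation via $\tfrac{d}{dz}(z^{-\nu}I_\nu)=z^{-\nu}I_{\nu+1}$ instead of \eqref{properties derivative Bessel}.
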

\begin{proof}
By symmetry, equation \eqref{law of maximum} assumes the form 
\begin{equation*}
\mathsf{P}\{\min_{0\leq s \leq t} T(s) \in -dx|V(0) = -c\} = 2\mathsf{P}\{T(t) \in dx\} \;\; x \in [0,ct).
\end{equation*} 
Moreover, for $x \in [-\epsilon,ct-2\epsilon)$ and $\epsilon \in [0,ct)$, result \eqref{Negative reflection} yields to
\begin{align*}
&\mathsf{P}\{T(t) \in dx | \min_{0\leq s \leq t} T(s) \geq -\epsilon,V(0) = -c\}\\&=\frac{\mathsf{P}\{T(t) \in dx, \min_{0\leq s \leq t} T(s) \geq -\epsilon |V(0) = -c\}}{\mathsf{P}\{\min_{0\leq s \leq t} T(s) \geq - \epsilon|V(0) = -c\}}\\
&=\frac{\mathsf{P}\{T(t) \in dx|V(0) = -c\}-\mathsf{P}\{T(t) \in dx, \min_{0\leq s \leq t} T(s) < -\epsilon |V(0) = -c\}}{\mathsf{P}\{\min_{0\leq s \leq t} T(s) \geq - \epsilon|V(0) = -c\}} \\
&=\frac{\mathsf{P}\{T(t) \in dx|V(0) = -c\}-\mathsf{P}\{T(t) \in 2\epsilon+dx|V(0) = -c\}}{\int_{0}^{\epsilon} 2 \mathsf{P}\{T(t) \in dy\}} = \frac{p^{-}_{\lambda,c}(x,t)-p^{-}_{\lambda,c}(2\epsilon+x,t)}{2\int_0^\epsilon p_{\lambda,c}(y,t)dy}.
\end{align*}
An application of the mean value theorem leads to the following asymptotic estimate
\begin{align}\label{asymptotic estimate formula}
&\mathsf{P}\{T(t) \in dx \lvert \min_{0\leq s \leq t} T(s) \geq -\epsilon,V(0) = -c\} \sim \frac{-2\epsilon \frac{\partial }{\partial x}p^{-}_{\lambda,c}(x,t)}{2\epsilon p_{\lambda,c}(0,t)}, \;\; \epsilon \downarrow 0,
\end{align}
Now $p_{\lambda,c}(0,t) =  p^{-}_{\lambda,c}(0,t) = e^{-\lambda t}\frac{\lambda}{2c}(I_0(\lambda t)+I_1(\lambda t))$, while the space derivative of $p^{-}_{\lambda,c}(x,t)$, stated in \eqref{p minus}, can be obtained by performing the following calculation
\begin{align}\label{proof limit telegraph meander 2}
&\frac{\partial}{\partial x}p^{-}_{\lambda,c}(x,t) =-\frac{e^{-\lambda t}}{2c}\bigg\{\frac{\lambda x}{c}\frac{I_1(\frac{\lambda}{c}\sqrt{c^2t^2-x^2})}{\sqrt{c^2t^2-x^2}} + \sqrt{\frac{ct+x}{ct-x}}\frac{ctI_1(\frac{\lambda}{c}\sqrt{c^2t^2-x^2})}{(ct+x)^2} \nonumber\\
&+\sqrt{\frac{ct-x}{ct+x}}\frac{\lambda x}{c\sqrt{c^2t^2-x^2}}\bigg(I_0\big(\frac{\lambda}{c}\sqrt{c^2t^2-x^2}\big) -\frac{c}{\lambda\sqrt{c^2t^2-x^2}}I_1(\frac{\lambda}{c}\sqrt{c^2t^2-x^2}) \bigg)\bigg\} \nonumber\\
&=-\frac{e^{-\lambda t}}{2c}\bigg\{I_0\big(\frac{\lambda}{c}\sqrt{c^2t^2-x^2}\big)\frac{\lambda x}{c(ct+x)}+\frac{I_1(\frac{\lambda}{c}\sqrt{c^2t^2-x^2})}{\sqrt{c^2t^2-x^2}}\Big(\frac{\lambda x}{c}+\frac{ct-x}{ct+x}\Big)\bigg\}.
\end{align}
We remark that in the previous steps we used \eqref{properties Bessel} and the property
\begin{equation}\label{properties derivative Bessel}
\frac{d}{dx}I_\nu(x) = I_{\nu-1}(x) - \frac{\nu}{x}I_\nu(x).
\end{equation}
In conclusion, we have proved that
\begin{align}\label{proof limit telegraph meander 3}
\lim_{\epsilon \downarrow 0}\mathsf{P}\{T(t) \in dx |\min_{0\leq s \leq t} T(s) \geq -\epsilon,V(0) = -c\} = -\frac{\frac{\partial}{\partial x}p^{-}_{\lambda,c}(x,t)}{p_{\lambda,c}^{-}(0,t)},
\end{align}
and hence from \eqref{proof limit telegraph meander 2} we see that the left hand side of \eqref{proof limit telegraph meander 3} coincides with \eqref{Telegraph meander t 3}.
\end{proof}
Theorem \ref{Theorem telegraph meander t} exhibits once more a connection between the telegraph process and Brownian motion. Indeed, we recall that the density of Brownian meander at the end point is
\begin{equation}
q(x,t) = \frac{x}{t}e^{-\frac{x^2}{2t}} \;\; x>0.
\end{equation}
Then, by denoting with $p$ the density of Brownian motion, it holds that
\begin{equation}\label{Brownian meander t 2}
q(x,t) = -\frac{\frac{\partial}{\partial x}p(x,t)}{p(0,t)} \;\; x>0.
\end{equation}
From \eqref{Telegraph meander t 4}, we can write the distribution of the telegraph meander, written explicitly in \eqref{Telegraph meander t 3}, in terms of the density $p^{-}_{\lambda,c}$ as follows
\begin{align}\label{law of telegraph meander}
\mathsf{P}\{T^+(t) \in dx\} = \frac{p^{-}_{\lambda,c}(ct,t)}{p^{-}_{\lambda,c}(0,t)}-\frac{\frac{\partial}{\partial x}p^{-}_{\lambda,c}(x,t)}{p_{\lambda,c}^{-}(0,t)}\mathsf{1}_{[0,ct)}(x),
\end{align}
where the first member in the right hand side represents the discrete component. From this equation it is easy to see that \eqref{Telegraph meander t 3} defines a proper probability distribution.
Moreover, we can also derive the distribution function of the telegraph meander at the end time
\begin{equation}\label{Telegraph meander cdf}
\mathsf{P}\{T^+(t)\leq x\} = 
\begin{cases}
0 &x< 0\\
1-\frac{\sqrt{\frac{ct-x}{ct+x}}I_1(\frac{\lambda}{c}\sqrt{c^2t^2-x^2})+I_0\big(\frac{\lambda}{c}\sqrt{c^2t^2-x^2}\big)}{I_0(\lambda t)+I_1(\lambda t)} &0\leq x<ct \\
1 &x\geq ct.
\end{cases}
\end{equation}
\section{Properties of Telegraph meander}
We can rewrite formula \eqref{p minus} as
\begin{equation}\label{p minus 2}
p_{\lambda,c}^{-}(x,t) = \frac{e^{-\lambda t}}{2c}\bigg(\lambda I_0\big(\frac{\lambda}{c}\sqrt{c^2t^2-x^2}\big)+\frac{\partial}{\partial t}I_0\big(\frac{\lambda}{c}\sqrt{c^2t^2-x^2}\big)+c\frac{\partial}{\partial x}I_0\big(\frac{\lambda}{c}\sqrt{c^2t^2-x^2}\big) \bigg),
\end{equation}
and we see that, in order to find the characteristic function of $T^+(t)$, we first need an explicit form of the following integral
\begin{equation}\label{G function}
G(\gamma,t) = \int_0^{ct}e^{i\gamma x}I_0\big(\frac{\lambda}{c}\sqrt{c^2t^2-x^2}\big)dx.
\end{equation}
\begin{lemma}\label{Lemma}
Let $|\gamma| <\lambda/c$, then the following result holds
\begin{align}\label{formula Lemma}
&\int_0^{ct}e^{i\gamma x}I_0\big(\frac{\lambda}{c}\sqrt{c^2t^2-x^2}\big)dx = c\frac{\sinh{(t\sqrt{\lambda^2-c^2\gamma^2})}}{\sqrt{\lambda^2-c^2\gamma^2}} + i c^2\gamma\int_0^t\frac{\sinh{(s\sqrt{\lambda^2-c^2\gamma^2})}}{\sqrt{\lambda^2-c^2\gamma^2}}I_0(\lambda (t-s))ds.
\end{align}
\begin{proof}
We first recall equation (25) stated in \cite{O 1990},
\begin{align}\label{Orsingher 90}
\frac{\partial^2}{\partial t^2}I_0\big(\frac{\lambda}{c}\sqrt{c^2t^2-x^2}\big) = c^2\frac{\partial^2}{\partial x^2}I_0\big(\frac{\lambda}{c}\sqrt{c^2t^2-x^2}\big)+ \lambda^2 I_0\big(\frac{\lambda}{c}\sqrt{c^2t^2-x^2}\big).
\end{align}
After multiplying both sides of equation \eqref{Orsingher 90} with $e^{i\gamma x}$ and integrating over the interval $(0,ct)$, we obtain
\begin{align*}
&\int_{0}^{ct}e^{i\gamma x}\frac{\partial^2}{\partial t^2}I_0\big(\frac{\lambda}{c}\sqrt{c^2t^2-x^2}\big)dx =c^2\int_{0}^{ct}e^{i\gamma x}\frac{\partial^2}{\partial x^2}I_0\big(\frac{\lambda}{c}\sqrt{c^2t^2-x^2}\big)dx+ \lambda^2 \int_{0}^{ct}e^{i\gamma x}I_0\big(\frac{\lambda}{c}\sqrt{c^2t^2-x^2}\big)dx.
\end{align*}
By interchanging the derivative with the integral on the left-hand side and integrating by parts the first term on the right-hand side, it follows that
\begin{align*}
&\frac{\partial^2}{\partial t^2}\int_{0}^{ct}e^{i\gamma x}I_0\big(\frac{\lambda}{c}\sqrt{c^2t^2-x^2}\big)dx =(\lambda^2-c^2\gamma^2)\int_{0}^{ct}e^{i\gamma x}I_0\big(\frac{\lambda}{c}\sqrt{c^2t^2-x^2}\big)dx + i c^2\gamma I_0(\lambda t).
\end{align*}
Therefore the function $G(\gamma,t)$ is a solution of the initial value problem
\begin{equation}\label{Cauchy problem}
\begin{cases}
\frac{\partial^2}{\partial t^2}G(\gamma,t) -(\lambda^2-c^2\gamma^2)G(\gamma,t) = ic^2\gamma I_0(\lambda t).  \\
G(\gamma,0) = 0 \\
\frac{\partial}{\partial t} G(\gamma,0)|_{t=0} = c.
\end{cases}
\end{equation}
We apply the $t$-Laplace transform of parameter $\beta>\lambda$. The second member of the equation in \eqref{Cauchy problem} can be obtained through the following formula
\begin{equation}\label{Laplace transform I0}
\int_0^{+\infty}e^{-\beta t}I_0(\lambda t)dt = \frac{1}{\sqrt{\beta^2-\lambda^2}}.
\end{equation}
For the first member, by setting $\mathcal{L}G(\gamma,t)=\int_0^{+\infty}e^{-\beta t}G(\gamma,t)dt$, we have that
\begin{align}\label{proof lemma 1}
&\int_0^{+\infty}e^{-\beta t}\frac{\partial^2}{\partial t^2}G(\gamma,t)dt = \bigg[e^{-\beta t}\frac{\partial}{\partial t}G(\gamma,t)\bigg]_{0}^{+\infty}+\beta\int_0^{+\infty}e^{-\beta t}\frac{\partial}{\partial t}G(\gamma,t)dt \nonumber\\
&=-c+\beta\bigg[e^{-\beta t}G(\gamma,t)\bigg]_{0}^{+\infty}+\beta^2\int_0^{+\infty}e^{-\gamma t}G(\gamma,t)dt = -c+\beta^2\mathcal{L}G(\gamma,t).
\end{align}
Where the fact that $e^{-\beta t}\frac{\partial}{\partial t}G(\gamma,t)$ and $e^{-\beta t}G(\gamma,t)$ tend to zero as $t\to +\infty$, if $\beta>\lambda$, can be deduced by the estimate $G(\gamma,t)\leq \frac{c}{\lambda}\sinh(\lambda t)$. 

Therefore by means of \eqref{Laplace transform I0} and \eqref{proof lemma 1}, the Laplace transform of $G(\gamma,t)$ takes the form
\begin{align}\label{Laplace-Fourier transform}
&\mathcal{L}G(\gamma,t)  =\frac{c}{\beta^2-\lambda^2-c^2\gamma^2}+\frac{ic^2\gamma}{(\beta^2-\lambda^2-c^2\gamma^2)\sqrt{\beta^2-\lambda^2}}.
\end{align}
And now we derive the inverse Laplace transform of $\mathcal{L}G(\gamma,t)$, that is the explicit form of the function $G(\gamma,t)$. From \eqref{Laplace-Fourier transform}
\begin{align*}
&\mathcal{L}G(\gamma,t) =\frac{c}{2\sqrt{\lambda^2-c^2\gamma^2}}\bigg(\frac{1}{\beta-\sqrt{\lambda^2-c^2\gamma^2}} -\frac{1}{\beta+\sqrt{\lambda^2-c^2\gamma^2}}\bigg)\\
&+\frac{ic^2\gamma}{2\sqrt{\lambda^2-c^2\gamma^2}}\frac{1}{\sqrt{\beta^2-\lambda^2}}\bigg(\frac{1}{\beta-\sqrt{\lambda^2-c^2\gamma^2}} -\frac{1}{\beta+\sqrt{\lambda^2-c^2\gamma^2}}\bigg).
\end{align*}
Then, by using again \eqref{Laplace transform I0}
\begin{align*}
&\int_0^{+\infty}e^{-\beta t}G(\gamma,t)dt \\&=\int_{0}^{+\infty}e^{-\beta t}\frac{c\sinh(\sqrt{\lambda^2-c^2\gamma^2})}{\sqrt{\lambda^2-c^2\gamma^2}}dt+\int_0^{+\infty}e^{-\beta t}I_0(\lambda t)dt \int_0^{+\infty}e^{-\beta t}\frac{ic^2\gamma\sinh(\sqrt{\lambda^2-c^2\gamma^2})}{\sqrt{\lambda^2-c^2\gamma^2}}dt  \\
&=\int_{0}^{+\infty}e^{-\beta t}\bigg(\frac{c\sinh(\sqrt{\lambda^2-c^2\gamma^2})}{\sqrt{\lambda^2-c^2\gamma^2}}+\int_{0}^t\frac{ic^2\gamma\sinh((t-s)\sqrt{\lambda^2-c^2\gamma^2})}{\sqrt{\lambda^2-c^2\gamma^2}} I_0(\lambda s)ds\bigg)dt.
\end{align*}

And so at the end formula \eqref{formula Lemma} appears
\begin{align*}
G(\gamma,t) = c\frac{\sinh{(t\sqrt{\lambda^2-c^2\gamma^2})}}{\sqrt{\lambda^2-c^2\gamma^2}} + ic^2\gamma\frac{\sinh(t\sqrt{\lambda^2-c^2\gamma^2})}{\sqrt{\lambda^2-c^2\gamma^2}} \ast I_0(\lambda t),
\end{align*}
where $*$ denotes the convolution operator.
\end{proof}
\end{lemma}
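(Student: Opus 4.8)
The plan is to identify $G(\gamma,t)$, defined in \eqref{G function}, as the unique solution of a second‑order linear ODE in $t$ with an explicit forcing term, and then to solve that ODE by the $t$‑Laplace transform, inverting by partial fractions and the convolution theorem.

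Write $\psi(x,t):=I_0\big(\tfrac{\lambda}{c}\sqrt{c^2t^2-x^2}\big)$. Since $I_0$ is entire and even, $\psi$ is real‑analytic in $(x,t)$ and, by equation~(25) of \cite{O 1990} (quoted above as \eqref{Orsingher 90}), it satisfies $\psi_{tt}=c^2\psi_{xx}+\lambda^2\psi$. Multiplying this identity by $e^{i\gamma x}$ and integrating over $(0,ct)$, I would pull the $t$‑derivatives out of the integral using the Leibniz rule for the moving endpoint $x=ct$, and integrate by parts twice in $x$ on the right‑hand side. This generates endpoint contributions at $x=0$ and $x=ct$, which are evaluated from $\psi(ct,t)=I_0(0)=1$, $\psi_x(0,t)=0$, $\psi(0,t)=I_0(\lambda t)$, and (from the quadratic term of the series for $I_0$) $\psi_x(ct,t)=-\tfrac{\lambda^2 t}{2c}$, $\psi_t(ct,t)=\tfrac{\lambda^2 t}{2}$. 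The crux is that all the $x=ct$ contributions cancel, leaving exactly the initial value problem \eqref{Cauchy problem},
\[
G_{tt}(\gamma,t)-(\lambda^2-c^2\gamma^2)G(\gamma,t)=ic^2\gamma\,I_0(\lambda t),\qquad G(\gamma,0)=0,\quad G_t(\gamma,0)=c,
\]
where the initial conditions are read off directly from \eqref{G function} and from $G_t(\gamma,t)=c\,e^{i\gamma ct}+\int_0^{ct}e^{i\gamma x}\psi_t(x,t)\,dx$ together with $\psi(ct,t)=1$.

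Next I would apply the $t$‑Laplace transform for $\beta>\lambda$; the crude bound $|G(\gamma,t)|\le ct\,I_0(\lambda t)$ (from $|e^{i\gamma x}|\le1$ and monotonicity of $I_0$), which is $o(e^{\beta t})$, legitimizes both the transform and the vanishing of the $t=\infty$ boundary terms in the integrations by parts. Using $\int_0^{+\infty}e^{-\beta t}I_0(\lambda t)\,dt=(\beta^2-\lambda^2)^{-1/2}$ from \eqref{Laplace transform I0}, the transformed IVP gives $\mathcal{L}G(\gamma,t)=\tfrac{c}{\beta^2-(\lambda^2-c^2\gamma^2)}+\tfrac{ic^2\gamma}{(\beta^2-(\lambda^2-c^2\gamma^2))\sqrt{\beta^2-\lambda^2}}$, as in \eqref{Laplace-Fourier transform}. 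Under the hypothesis $|\gamma|<\lambda/c$ the quantity $\omega:=\sqrt{\lambda^2-c^2\gamma^2}$ is real and positive, so the split $\tfrac{1}{\beta^2-\omega^2}=\tfrac{1}{2\omega}\big(\tfrac{1}{\beta-\omega}-\tfrac{1}{\beta+\omega}\big)$ and the pair $\mathcal{L}(\sinh(\omega t)/\omega)=(\beta^2-\omega^2)^{-1}$ invert the first term to $c\,\sinh(\omega t)/\omega$; the second term is a product of two transforms, so the convolution theorem inverts it to $\int_0^t\tfrac{\sinh(\omega s)}{\omega}I_0(\lambda(t-s))\,ds$. Adding the two pieces yields exactly \eqref{formula Lemma}.

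The only real obstacle I anticipate is the first step: correctly tracking the endpoint terms created by the moving upper limit $x=ct$ when commuting $\partial_t^2$ with $\int_0^{ct}$ and by the double integration by parts in $x$, and verifying that they cancel so that no spurious forcing term contaminates \eqref{Cauchy problem}. Organizing the computation by differentiating $G$ once (to obtain the formula for $G_t$ above) and only then differentiating again keeps this in check; everything after the IVP is established is routine Laplace‑transform calculus. As an alternative one could bypass the ODE derivation and verify directly that the right‑hand side of \eqref{formula Lemma} solves \eqref{Cauchy problem}, but this simply trades the boundary‑term bookkeeping for a comparable computation and still requires \eqref{Orsingher 90} to pin down the equation.
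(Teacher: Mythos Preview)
Your proposal is correct and follows essentially the same route as the paper: derive the Cauchy problem \eqref{Cauchy problem} from the PDE \eqref{Orsingher 90} by multiplying by $e^{i\gamma x}$, integrating over $(0,ct)$, and handling the boundary terms, then solve it via the $t$-Laplace transform and invert by partial fractions and the convolution theorem. Your treatment of the moving-endpoint contributions is in fact more explicit than the paper's (which simply asserts the interchange of $\partial_t^2$ with $\int_0^{ct}$), but the underlying computation and the subsequent Laplace-transform analysis are identical.
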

We also state the following proposition, which highlights the structure of the formula expressed in Lemma \ref{Lemma}.
\begin{proposition}
Let $|\gamma| <\lambda/c$, then the following result holds
\begin{equation}\label{Lemma 3}
\int_{0}^{ct}\cos(\gamma x)I_0\big(\frac{\lambda}{c}\sqrt{c^2t^2-x^2}\big)dx = \frac{c\sinh{(t\sqrt{\lambda^2-c^2\gamma^2})}}{\sqrt{\lambda^2-c^2\gamma^2}}.
\end{equation}
\end{proposition}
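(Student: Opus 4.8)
The plan is to read off the identity \eqref{Lemma 3} from Lemma \ref{Lemma} by splitting $G(\gamma,t)$ into its real and imaginary parts. In the definition \eqref{G function} write $e^{i\gamma x}=\cos(\gamma x)+i\sin(\gamma x)$; since $I_0\big(\tfrac{\lambda}{c}\sqrt{c^2t^2-x^2}\big)$ is real for $x\in(0,ct)$, this gives
\begin{equation*}
G(\gamma,t)=\int_0^{ct}\cos(\gamma x)I_0\Big(\tfrac{\lambda}{c}\sqrt{c^2t^2-x^2}\Big)dx+i\int_0^{ct}\sin(\gamma x)I_0\Big(\tfrac{\lambda}{c}\sqrt{c^2t^2-x^2}\Big)dx,
\end{equation*}
so that the left-hand side of \eqref{Lemma 3} is precisely $\operatorname{Re}G(\gamma,t)$.

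Next I would take real parts in the explicit formula \eqref{formula Lemma}. The hypothesis $|\gamma|<\lambda/c$ ensures $\lambda^2-c^2\gamma^2>0$, hence $\sqrt{\lambda^2-c^2\gamma^2}>0$ and the functions $s\mapsto\sinh(s\sqrt{\lambda^2-c^2\gamma^2})$ and $s\mapsto I_0(\lambda(t-s))$ are real on $[0,t]$. Consequently, on the right-hand side of \eqref{formula Lemma} the first summand $c\,\sinh(t\sqrt{\lambda^2-c^2\gamma^2})/\sqrt{\lambda^2-c^2\gamma^2}$ is real, while the second summand, which carries the factor $ic^2\gamma$, is purely imaginary. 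Equating real parts yields \eqref{Lemma 3} at once.

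Since the statement is an immediate corollary of Lemma \ref{Lemma}, there is no genuine obstacle; the only point to verify is the elementary observation that the two terms of \eqref{formula Lemma} lie in $\mathbb{R}$ and in $i\mathbb{R}$ respectively, which is exactly where the restriction $|\gamma|<\lambda/c$ is used. Alternatively, one may give a self-contained argument that bypasses the convolution term: setting $H(\gamma,t):=\operatorname{Re}G(\gamma,t)$ and taking real parts in the Cauchy problem \eqref{Cauchy problem}, one finds that $H$ solves the homogeneous equation $\partial_t^2 H-(\lambda^2-c^2\gamma^2)H=0$ with $H(\gamma,0)=0$ and $\partial_t H(\gamma,0)|_{t=0}=c$ (the forcing term $ic^2\gamma I_0(\lambda t)$ drops out, being purely imaginary), whose unique solution is $c\,\sinh(t\sqrt{\lambda^2-c^2\gamma^2})/\sqrt{\lambda^2-c^2\gamma^2}$. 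I would present the first, shorter route.
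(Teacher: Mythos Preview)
Your argument is correct: under $|\gamma|<\lambda/c$ the two summands in \eqref{formula Lemma} are respectively real and purely imaginary, so equating real parts of \eqref{G function} and \eqref{formula Lemma} yields \eqref{Lemma 3} immediately. Your alternative route via the real part of the Cauchy problem \eqref{Cauchy problem} is also valid.

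The paper, however, proceeds quite differently and does not appeal to Lemma~\ref{Lemma} at all. It expands $\cos(\gamma x)$ as a Maclaurin series, integrates $x^{2k}I_0\big(\tfrac{\lambda}{c}\sqrt{c^2t^2-x^2}\big)$ term by term using the series for $I_0$, applies the duplication formula for the Gamma function, interchanges the two resulting sums, and recognises the binomial sum $\sum_{k=0}^m\binom{m}{k}(-c^2\gamma^2/\lambda^2)^k=(1-c^2\gamma^2/\lambda^2)^m$, which produces the Taylor series of $\sinh$. Your approach is shorter and conceptually cleaner, since it reuses the machinery already built in Lemma~\ref{Lemma}; the paper's computation is self-contained and does not rely on the Laplace-transform derivation of $G(\gamma,t)$, at the cost of more series manipulation.
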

\begin{proof}
By exploiting the Mac-Laurin expansion of the cosine function we have 
\begin{align*}
&\int_{0}^{ct}\cos(\gamma x)I_0\big(\frac{\lambda}{c}\sqrt{c^2t^2-x^2}\big)dx = \sum_{k \geq 0}(-1)^k\frac{\gamma^{2k}}{(2k)!}\int_0^{ct}x^{2k}I_0\big(\frac{\lambda}{c}\sqrt{c^2t^2-x^2}\big)dx\\
&=\sum_{k \geq 0}(-1)^k\frac{\gamma^{2k}}{(2k)!}\frac{(ct)^{2k+1}}{2}\Gamma\Big(k+\frac{1}{2}\Big)\sum_{r\geq 0}\Big(\frac{\lambda t}{2}\Big)^{2r}\frac{1}{r!\Gamma(r+k+\frac{3}{2})}.
\end{align*}
Now, we use the duplication formula of the Gamma function, from which we can write that
\begin{align*}
&\int_{0}^{ct}\cos(\gamma x)I_0\big(\frac{\lambda}{c}\sqrt{c^2t^2-x^2}\big)dx=\sum_{k \geq 0}(-1)^k\frac{\gamma^{2k}}{k!}(ct)^{2k+1}\sum_{r\geq 0}\frac{(\lambda t)^{2r}}{r!}\frac{(r+k)!}{(2(r+k)+1)!}\\
&=\sum_{k \geq 0}ct(-1)^k\Big(\frac{\gamma c}{\lambda}\Big)^{2k}\sum_{m\geq k}\binom{m}{k}\frac{(\lambda t)^{2m}}{(2m+1)!}=\sum_{m\geq 0}\frac{ct(\lambda t)^{2m}}{(2m+1)!}\sum_{k=0}^m\binom{m}{k}\Big(-\frac{c^2\gamma^2}{\lambda^2}\Big)^{k}\\
&=\sum_{m\geq 0}\frac{ct(\lambda t)^{2m}}{(2m+1)!}\Big(1-\frac{c^2\gamma^2}{\lambda^2}\Big)^{m},
\end{align*}
and so, by recalling the Mac-Laurin expansion of the hyperbolic sine, the thesis follows. 
\end{proof}
In light of the previous proposition, we apply the Euler's formula to the first member of \eqref{formula Lemma}, from which we can state that
\begin{align}\label{odd part G function}
&\int_{0}^{ct}\sin(\gamma x)I_0\big(\frac{\lambda}{c}\sqrt{c^2t^2-x^2}\big)dx =  c^2\gamma\int_0^t\frac{\sinh{(s\sqrt{\lambda^2-c^2\gamma^2})}}{\sqrt{\lambda^2-c^2\gamma^2}}I_0(\lambda (t-s))ds \;\; |\gamma| <\frac{\lambda}{c}.
\end{align}
\begin{theorem}
Let $T^+ = (T^+(s))_{s \in [0,t]}$ be a telegraph meander. Then
\begin{align}
&\mathsf{E}[e^{i\gamma T^+(t)}] =1 + c\gamma(c\gamma+i\lambda )\frac{\sinh(t\sqrt{\lambda^2 -c^2 \gamma^2})+ic\gamma\int_0^t\sinh{(s\sqrt{\lambda^2 -c^2 \gamma^2})}I_0(\lambda(t-s))ds}{\lambda\sqrt{\lambda^2 -c^2 \gamma^2}(I_0(\lambda t)+I_1(\lambda t))} \nonumber\\
&+ic\gamma\frac{\cosh(t\sqrt{\lambda^2 -c^2 \gamma^2})-I_0(\lambda t)+ic\gamma\int_0^t\cosh(s\sqrt{\lambda^2 -c^2 \gamma^2})I_0(\lambda(t-s))ds}{\lambda(I_0(\lambda t)+I_1(\lambda t))} \;\;|\gamma| <\frac{\lambda}{c}.
\end{align}
\begin{proof}
By having in mind \eqref{law of telegraph meander} and formula \eqref{p minus 2}, partial integration leads to
\begin{align*}
&\mathsf{E}[e^{i\gamma T^+(t)}] =-\int_0^{ct}\frac{e^{i\gamma x}\frac{\partial}{\partial x}p^{-}_{\lambda,c}(x,t)}{p^{-}_{\lambda,c}(0,t)}dx + \frac{e^{i\gamma ct}p^{-}_{\lambda,c}(ct,t)}{p^{-}_{\lambda,c}(0,t)}= 1 + i\gamma\int_0^{ct} \frac{e^{i\gamma x}p^{-}_{\lambda,c}(x,t)}{p^{-}_{\lambda,c}(0,t)}dx\\
&=1 + \frac{i\gamma}{I_0(\lambda t)+I_1(\lambda t)}\bigg(\int_0^{ct} \frac{e^{i\gamma x}}{\lambda}\frac{\partial}{\partial t}I_0\big(\frac{\lambda}{c}\sqrt{c^2t^2-x^2}\big)dx+\int_0^{ct} \frac{e^{i\gamma x}c}{\lambda}\frac{\partial}{\partial x}I_0\big(\frac{\lambda}{c}\sqrt{c^2t^2-x^2}\big)dx\\&+\int_0^{ct} e^{i\gamma x}I_0\big(\frac{\lambda}{c}\sqrt{c^2t^2-x^2}\big)dx\bigg).
\end{align*}
By interchanging the derivative with the integral sign on the first term and integrating by parts the second we get
\begin{align}\label{cf telegraph meander}
&\mathsf{E}[e^{i\gamma T^+(t)}]=1 + \gamma\frac{i\frac{\partial}{\partial t}G(\gamma,t) -icI_0(\lambda t) +(c\gamma+i\lambda)G(\gamma,t)}{\lambda(I_0(\lambda t)+I_1(\lambda t))},
\end{align}
where $G(\gamma,t)$ is defined in \eqref{G function}. Now, by putting into \eqref{cf telegraph meander} the explicit expression of $G(\gamma,t)$, stated in Lemma \ref{Lemma}, together with its time derivative,
\begin{align*}
\frac{\partial}{\partial t}G(\gamma,t) = c \cosh(t\sqrt{\lambda^2-c^2 \gamma^2}) +ic^2\gamma\int_0^t \cosh((t-s)\sqrt{\lambda^2-c^2 \gamma^2})I_0(\lambda s) ds,
\end{align*}
after simple calculations, we get the desired result. 
\end{proof}
\end{theorem}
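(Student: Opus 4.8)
The plan is to feed the explicit law \eqref{law of telegraph meander} of $T^{+}(t)$ into the Fourier transform and to reduce everything to the single integral $G(\gamma,t)$ of \eqref{G function}, which Lemma \ref{Lemma} has already evaluated in closed form. Recall that, by \eqref{law of telegraph meander}, $T^{+}(t)$ carries an atom at $ct$ of mass $p^{-}_{\lambda,c}(ct,t)/p^{-}_{\lambda,c}(0,t)$ and the density $-\partial_{x}p^{-}_{\lambda,c}(x,t)/p^{-}_{\lambda,c}(0,t)$ on $[0,ct)$. First I would write $\mathsf{E}[e^{i\gamma T^{+}(t)}]$ as $e^{i\gamma ct}p^{-}_{\lambda,c}(ct,t)/p^{-}_{\lambda,c}(0,t)$ minus $p^{-}_{\lambda,c}(0,t)^{-1}\int_{0}^{ct}e^{i\gamma x}\partial_{x}p^{-}_{\lambda,c}(x,t)\,dx$, and integrate the last term by parts: the boundary value at $x=ct$ cancels the atom exactly, the boundary value at $x=0$ contributes the constant $1$, and there remains
\[
\mathsf{E}[e^{i\gamma T^{+}(t)}]=1+\frac{i\gamma}{p^{-}_{\lambda,c}(0,t)}\int_{0}^{ct}e^{i\gamma x}p^{-}_{\lambda,c}(x,t)\,dx .
\]

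Next I would substitute the decomposition \eqref{p minus 2}, by which $p^{-}_{\lambda,c}(x,t)=\tfrac{e^{-\lambda t}}{2c}\big(\lambda\Phi+\partial_{t}\Phi+c\,\partial_{x}\Phi\big)$ with $\Phi=\Phi(x,t):=I_{0}\big(\tfrac{\lambda}{c}\sqrt{c^{2}t^{2}-x^{2}}\big)$, together with $p^{-}_{\lambda,c}(0,t)=\tfrac{\lambda e^{-\lambda t}}{2c}\big(I_{0}(\lambda t)+I_{1}(\lambda t)\big)$; the prefactor $\tfrac{e^{-\lambda t}}{2c}$ cancels and the problem reduces to the three integrals of $e^{i\gamma x}$ against $\Phi$, $\partial_{t}\Phi$ and $\partial_{x}\Phi$ over $(0,ct)$. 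The first is $G(\gamma,t)$; for the second the Leibniz rule gives $\int_{0}^{ct}e^{i\gamma x}\partial_{t}\Phi\,dx=\partial_{t}G(\gamma,t)-c\,e^{i\gamma ct}$, the endpoint term appearing because the argument of $I_{0}$ vanishes (so $\Phi=1$) at $x=ct$; for the third, integration by parts gives $\int_{0}^{ct}e^{i\gamma x}\partial_{x}\Phi\,dx=e^{i\gamma ct}-I_{0}(\lambda t)-i\gamma G(\gamma,t)$, using $\Phi=1$ at $x=ct$ and $\Phi=I_{0}(\lambda t)$ at $x=0$. Assembling the three pieces, the $e^{i\gamma ct}$ contributions cancel and one reaches the intermediate identity
\[
\mathsf{E}[e^{i\gamma T^{+}(t)}]=1+\frac{\gamma}{\lambda\big(I_{0}(\lambda t)+I_{1}(\lambda t)\big)}\Big(i\,\partial_{t}G(\gamma,t)-icI_{0}(\lambda t)+(c\gamma+i\lambda)G(\gamma,t)\Big).
\]

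To finish, I would insert the closed form of $G(\gamma,t)$ supplied by Lemma \ref{Lemma}, valid for $|\gamma|<\lambda/c$, and its $t$-derivative $\partial_{t}G(\gamma,t)=c\cosh\!\big(t\sqrt{\lambda^{2}-c^{2}\gamma^{2}}\big)+ic^{2}\gamma\int_{0}^{t}\cosh\!\big(s\sqrt{\lambda^{2}-c^{2}\gamma^{2}}\big)I_{0}(\lambda(t-s))\,ds$, obtained by differentiating the convolution representation (the boundary term vanishing since $\sinh 0=0$) and using the symmetry of the convolution. Then $(c\gamma+i\lambda)G(\gamma,t)$ contributes, after factoring $c$ out of $G$, exactly the part of the statement proportional to $\sinh$ and to the $\sinh$--$I_{0}$ convolution, namely the first fraction with prefactor $c\gamma(c\gamma+i\lambda)$; while $i\,\partial_{t}G(\gamma,t)-icI_{0}(\lambda t)$ contributes, after using $i\cdot ic^{2}\gamma=-c^{2}\gamma$ and factoring $ic\gamma$, exactly the part proportional to $\cosh(t\sqrt{\lambda^{2}-c^{2}\gamma^{2}})-I_{0}(\lambda t)$ and to the $\cosh$--$I_{0}$ convolution, namely the second fraction with prefactor $ic\gamma$. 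Dividing throughout by $\lambda\big(I_{0}(\lambda t)+I_{1}(\lambda t)\big)$ gives precisely the claimed expression.

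The only delicate part is the bookkeeping of endpoint terms: one must keep track that $\Phi$ equals $1$ at the moving endpoint $x=ct$ but $I_{0}(\lambda t)$ at $x=0$, and check that all $e^{i\gamma ct}$ contributions cancel. One also has to justify differentiating under the integral sign and commuting $\partial_{t}$ with the integral, which is harmless because $\Phi$ is jointly smooth on the compact region in play; and the hypothesis $|\gamma|<\lambda/c$ enters only to invoke Lemma \ref{Lemma} and to keep $\sqrt{\lambda^{2}-c^{2}\gamma^{2}}$ real (for $|\gamma|\ge\lambda/c$ one obtains the same formula with $\sinh$ and $\cosh$ replaced by $\sin$ and $\cos$). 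I do not anticipate a genuine obstacle beyond these routine verifications.
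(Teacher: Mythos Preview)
Your proposal is correct and follows essentially the same approach as the paper's proof: the same integration by parts to reduce to $1+i\gamma\int e^{i\gamma x}p^{-}_{\lambda,c}/p^{-}_{\lambda,c}(0,t)$, the same use of the decomposition \eqref{p minus 2}, the same intermediate identity in terms of $G(\gamma,t)$, and the same substitution of Lemma \ref{Lemma} together with $\partial_{t}G$. Your writeup is in fact slightly more explicit than the paper's about the Leibniz endpoint term and the cancellation of the $e^{i\gamma ct}$ contributions, but the argument is otherwise identical.
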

\begin{proposition}\label{proposition moment telegraph process}
Let $T^+ = (T^+(s))_{s \in [0,t]}$ be a telegraph meander. Then, for $p>0$
\begin{align}\label{Telegraph meander moments 2}
&\mathsf{E}[T^+(t)^{p}] =\Big(\frac{2c^2t}{\lambda}\Big)^{\frac{p}{2}}\frac{\Gamma(\frac{p}{2}+1)(I_{\frac{p}{2}}(\lambda t)+I_{\frac{p}{2}-1}(\lambda t)) -\frac{p}{2}\Gamma(\frac{p+1}{2})\sqrt{\frac{2}{\lambda t}}I_{\frac{p-1}{2}}(\lambda t)}{I_0(\lambda t)+I_1(\lambda t)}
\end{align}
\end{proposition}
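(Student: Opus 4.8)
\emph{Proof proposal.} The plan is to start from the explicit law of $T^+(t)$ given in \eqref{law of telegraph meander}, which has an atom of mass $p^{-}_{\lambda,c}(ct,t)/p^{-}_{\lambda,c}(0,t)$ at $ct$ and density $-\partial_x p^{-}_{\lambda,c}(x,t)/p^{-}_{\lambda,c}(0,t)$ on $[0,ct)$. Writing out $\mathsf{E}[T^+(t)^{p}]$ and integrating the continuous part by parts, the boundary term at $x=ct$ cancels exactly the atomic contribution, while the boundary term at $x=0$ vanishes because $p>0$; this leaves the clean identity
\begin{equation*}
\mathsf{E}[T^+(t)^{p}] = \frac{p}{p^{-}_{\lambda,c}(0,t)}\int_0^{ct}x^{p-1}\,p^{-}_{\lambda,c}(x,t)\,dx.
\end{equation*}
Using $p^{-}_{\lambda,c}(0,t)=e^{-\lambda t}\tfrac{\lambda}{2c}(I_0(\lambda t)+I_1(\lambda t))$ together with \eqref{p minus} and the identity $\sqrt{\tfrac{ct-x}{ct+x}}=\tfrac{ct-x}{\sqrt{c^2t^2-x^2}}$, the problem reduces to evaluating
\begin{equation*}
\int_0^{ct}x^{p-1}I_0\big(\tfrac{\lambda}{c}\sqrt{c^2t^2-x^2}\big)\,dx \qquad\text{and}\qquad \int_0^{ct}x^{p-1}(ct-x)\,\frac{I_1\big(\tfrac{\lambda}{c}\sqrt{c^2t^2-x^2}\big)}{\sqrt{c^2t^2-x^2}}\,dx.
\end{equation*}

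Next I would insert the defining power series of $I_0$ and $I_1$; the factor $1/\sqrt{c^2t^2-x^2}$ cancels the half-integer power carried by the $I_1$-series, and interchanging sum and integral (justified by Tonelli, since all integrands are nonnegative on $[0,ct)$) reduces everything to the elementary integrals $\int_0^{ct}x^{a-1}(c^2t^2-x^2)^k\,dx$. The substitution $x=ct\sqrt{u}$ turns these into Beta integrals, giving $\int_0^{ct}x^{a-1}(c^2t^2-x^2)^k\,dx=\tfrac{(ct)^{a+2k}}{2}B\big(\tfrac a2,k+1\big)=\tfrac{(ct)^{a+2k}}{2}\tfrac{\Gamma(a/2)\,k!}{\Gamma(a/2+k+1)}$, which I would apply with $a=p$ for the $I_0$-integral and with $a=p$ and $a=p+1$ for the two monomials appearing in the $I_1$-integral after expanding $(ct-x)(c^2t^2-x^2)^k$.

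Finally comes the recognition step: after cancelling the $k!$ against the $(k!)^2$ (respectively $k!\,(k+1)!$) in the Bessel coefficients, each resulting series has the form $\sum_{k\ge 0}\tfrac{1}{k!\,\Gamma(\nu+k+1)}\big(\tfrac{\lambda t}{2}\big)^{2k}=\big(\tfrac{\lambda t}{2}\big)^{-\nu}I_\nu(\lambda t)$, possibly after the index shift $j=k+1$ that is forced by the $(k+1)!$ coming from the $I_1$-expansion. The $I_0$-integral produces $\tfrac{\Gamma(p/2)}{2}\big(\tfrac{2c^2t}{\lambda}\big)^{p/2}I_{p/2}(\lambda t)$; the $I_1$-integral, after the shift, splits into a term in $I_{p/2-1}(\lambda t)$, a term in $\sqrt{\tfrac{2}{\lambda t}}I_{(p-1)/2}(\lambda t)$, and two spurious monomial terms $\pm\tfrac{(ct)^p}{\lambda t}$ (coming from subtracting the $k=0$ term in the shifts) which cancel one another. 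Combining the three contributions, multiplying by $p$, and using $\tfrac p2\Gamma(\tfrac p2)=\Gamma(\tfrac p2+1)$ gives precisely \eqref{Telegraph meander moments 2}. The main obstacle is the bookkeeping in the $I_1$-integral: one must track the two distinct Gamma-ratios $\Gamma(p/2+k+1)$ and $\Gamma((p+3)/2+k)$, perform the index shift that creates the half-integer order $(p-1)/2$, and check that the non-Bessel remainder terms indeed cancel; everything else is routine once the Beta-integral identity and the series form of $I_\nu$ are available.
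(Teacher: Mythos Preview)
Your proposal is correct and follows the same overall strategy as the paper: integrate by parts against the atom plus the density from \eqref{law of telegraph meander} to obtain $\mathsf{E}[T^+(t)^p]=\tfrac{p}{p^-_{\lambda,c}(0,t)}\int_0^{ct}x^{p-1}p^-_{\lambda,c}(x,t)\,dx$, and then evaluate this integral via Bessel series and Beta integrals. The one technical difference is in that last evaluation: you insert the form \eqref{p minus} and compute the $I_0$- and $I_1$-integrals separately by expanding in power series, whereas the paper inserts the equivalent representation \eqref{p minus 2}, $p^-_{\lambda,c}=\tfrac{e^{-\lambda t}}{2c}\big(\lambda I_0+\partial_t I_0+c\,\partial_x I_0\big)$, so that after one further integration by parts for $\partial_x I_0$ and pulling $\partial_t$ outside the integral, everything reduces to the single identity $\int_0^{ct}x^{p}I_0\big(\tfrac{\lambda}{c}\sqrt{c^2t^2-x^2}\big)\,dx=\tfrac{\Gamma(\tfrac{p+1}{2})}{2}\big(\tfrac{2c^2t}{\lambda}\big)^{(p+1)/2}I_{(p+1)/2}(\lambda t)$. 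Both routes produce and then cancel the same spurious $(ct)^p/(\lambda t)$ boundary terms, so the bookkeeping burden is comparable; the paper's variant simply packages the $I_1$-contribution as a $t$-derivative of an $I_0$-integral rather than as a second series computation with an index shift.
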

\begin{proof}
From formulas \eqref{law of telegraph meander} and \eqref{p minus 2}, we integrate by parts to obtain
\begin{align*}
&\mathsf{E}[T^+(t)^{p}] = -\frac{1}{p^{-}_{\lambda,c}(0,t)}\int_{0}^{ct}x^{p}\frac{\partial}{\partial x}p^{-}_{\lambda,c}(x,t)dx +(ct)^{p} \frac{p^{-}_{\lambda,c}(ct,t)}{p^{-}_{\lambda,c}(0,t)}  \\
&=\frac{1}{I_{0}(\lambda t)+I_{1}(\lambda t)}\bigg(\int_{0}^{ct}px^{p-1}I_0\big(\frac{\lambda}{c}\sqrt{c^2t^2-x^2}\big)dx +\frac{c}{\lambda}\int_{0}^{ct}px^{p-1}\frac{\partial}{\partial x}I_0\big(\frac{\lambda}{c}\sqrt{c^2t^2-x^2}\big)dx \\&+\frac{1}{\lambda}\int_{0}^{ct}px^{p-1}\frac{\partial}{\partial t}I_0\big(\frac{\lambda}{c}\sqrt{c^2t^2-x^2}\big)dx \bigg) \\
&=\frac{1}{I_{0}(\lambda t)+I_{1}(\lambda t)}\bigg(\int_{0}^{ct}px^{p-1}I_0\big(\frac{\lambda}{c}\sqrt{c^2t^2-x^2}\big)dx +\frac{p(ct)^p}{\lambda t}\frac{cp(p-1)}{\lambda}\int_{0}^{ct}x^{p-2}I_0(\frac{\lambda}{c}\sqrt{x^2-c^2t^2})dx\\&+\frac{1}{\lambda}\frac{d}{dt}\int_{0}^{ct}px^{p-1}I_0(\frac{\lambda}{c}\sqrt{x^2-c^2t^2})dx - \frac{p(ct)^p}{\lambda t} \bigg).
\end{align*}
Now, we use the following formula 
\begin{align}\label{Lemma 1}
&\int_{0}^{ct}x^{p}I_0\big(\frac{\lambda}{c}\sqrt{c^2t^2-x^2}\big)dx = \frac{\Gamma(\frac{p+1}{2})}{2}\Big(\frac{2c^2t}{\lambda}\Big)^{\frac{p+1}{2}}I_{\frac{p+1}{2}}(\lambda t), \;\; p >0,
\end{align}
from which it follows that
\begin{align*}
&\mathsf{E}[T^+(t)^{p}]=\frac{1}{I_{0}(\lambda t)+I_{1}(\lambda t)}\bigg(\Gamma\Big(\frac{p}{2}+1\Big)\Big(\frac{2c^2t}{\lambda}\Big)^{\frac{p}{2}}I_{\frac{p}{2}}(\lambda t) -p\frac{c}{\lambda}\Gamma\Big(\frac{p+1}{2}\Big)\Big(\frac{2c^2t}{\lambda}\Big)^{\frac{p-1}{2}}I_{\frac{p-1}{2}}(\lambda t)  \\
&+\frac{p^2}{4\lambda t}\Gamma\Big(\frac{p}{2}\Big)\Big(\frac{2c^2t}{\lambda}\Big)^{\frac{p}{2}} I_{\frac{p}{2}}(\lambda t)+\Gamma\Big(\frac{p}{2}+1\Big)\Big(\frac{2c^2t}{\lambda}\Big)^{\frac{p}{2}}I_{\frac{p}{2}-1}(\lambda t)-\frac{p^2}{4\lambda t}\Gamma\Big(\frac{p}{2}\Big)\Big(\frac{2c^2t}{\lambda}\Big)^{\frac{p}{2}}I_{\frac{p}{2}}(\lambda t)\bigg),
\end{align*}
and then, after simple manipulations the thesis can be achieved.
\end{proof}
We remember that $I_{\frac{1}{2}}(x) = \sqrt{\frac{2}{\pi x}}\sinh(x)$, $I_{-\frac{1}{2}}(x) = \sqrt{\frac{2}{\pi x}}\cosh(x)$, then thanks to \eqref{Telegraph meander moments 2}, we can write the mean and the variance 
\begin{align}
&\mathsf{E}[T^+(t)] = \frac{c(e^{\lambda t}-I_0(\lambda t))}{\lambda(I_0(\lambda t)+I_1(\lambda t))} \label{Telegraph meander first moment}\\
&\mathsf{V}[T^+(t)] =  \frac{2tc^2}{\lambda}- c^2\frac{I^2_0(\lambda t)+e^{2\lambda t}+2(I_1(\lambda t)\sinh(\lambda t)-I_0(\lambda t)\cosh(\lambda t))}{\lambda^2(I_0(\lambda t)+I_1(\lambda t))^2} \label{Telegraph meander second moment}
\end{align}
We now compare Brownian meander with telegraph meander. We first show that the density function of Brownian meander satisfies a parabolic partial differential equation in which at the first member there is the heat operator $\frac{\partial^2}{\partial x^2}-\frac{\partial}{\partial t}$.
\begin{proposition}
Let $B^+ = (B^+(s))_{s \in [0,t]}$ be Brownian meander. Then the density of $B^+(t)$ satisfies
\begin{equation}\label{pde Brownian meander 2}
\frac{1}{2}\frac{\partial^2u}{\partial x^2} - \frac{\partial u}{\partial t}=- \frac{u}{2t}
\end{equation}
\end{proposition}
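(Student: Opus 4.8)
The plan is to exploit the representation of the Brownian meander density already recorded in \eqref{Brownian meander t 2}, namely $q(x,t) = -\partial_x p(x,t)/p(0,t)$, where $p(x,t) = (2\pi t)^{-1/2}e^{-x^2/(2t)}$ is the Gaussian heat kernel. The key observation is that $p$ solves the heat equation $\tfrac12 \partial_{xx} p = \partial_t p$, and, since the heat operator commutes with $\partial_x$, the function $g := \partial_x p$ solves the same equation $\tfrac12 \partial_{xx} g = \partial_t g$. This is the exact analogue of the route used to pass from \eqref{Telegraph meander t 2} to \eqref{Telegrphic meander's equation}, only now the underlying kernel equation is parabolic rather than hyperbolic.

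First I would write $q = -g/p(0,t)$ and differentiate. The spatial part is immediate: $\tfrac12 \partial_{xx} q = -\tfrac12 (\partial_{xx} g)/p(0,t)$. For the time derivative I would apply the quotient rule, which produces one term involving $\partial_t g$ and one term involving $\partial_t p(0,t)$; here the only point requiring a moment's care is that the normalizing factor $p(0,t) = (2\pi t)^{-1/2}$ depends on $t$, and a one-line computation gives $\partial_t p(0,t) = -\tfrac{1}{2t}\, p(0,t)$. Substituting this yields $\partial_t q = -(\partial_t g)/p(0,t) + \tfrac{1}{2t}\, q$.

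Combining the two pieces, $\tfrac12 \partial_{xx} q - \partial_t q = \big(-\tfrac12 \partial_{xx} g + \partial_t g\big)/p(0,t) - \tfrac{1}{2t}\, q$, and the bracketed quantity vanishes because $g$ solves the heat equation, leaving exactly \eqref{pde Brownian meander 2}. As a brute-force fallback, in case one prefers not to invoke \eqref{Brownian meander t 2}, one may instead substitute $q(x,t) = \tfrac{x}{t}e^{-x^2/(2t)}$ directly, compute $\partial_x q$, $\partial_{xx} q$ and $\partial_t q$ by elementary differentiation, and check that the common factor $e^{-x^2/(2t)}$ survives throughout, the $x^3/t^3$ contributions cancel between $\tfrac12\partial_{xx}q$ and $\partial_t q$, and what remains is $-\tfrac{x}{2t^2}e^{-x^2/(2t)} = -q/(2t)$. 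There is no genuine obstacle: the statement is a verification, and the mild bookkeeping with the time-dependent normalization $p(0,t)$ is the only step that is not completely automatic.
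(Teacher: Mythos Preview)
Your proof is correct and follows essentially the same route as the paper: both arguments exploit the representation \eqref{Brownian meander t 2}, the heat equation satisfied by $p$ (equivalently by $\partial_x p$), and the identity $\partial_t p(0,t) = -\tfrac{1}{2t}p(0,t)$. The only cosmetic difference is that you differentiate the quotient $q=-g/p(0,t)$ directly, whereas the paper differentiates the product form $q\,p(0,t)=-\partial_x p$ and divides at the end.
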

\begin{proof}
For simplicity we rewrite equation \eqref{Brownian meander t 2} as
\begin{equation}\label{eq Brownian meander t 2}
q(x,t)p(0,t) = -\frac{\partial}{\partial x}p(x,t).
\end{equation}
By multiplying by $1/2$ and applying the second order space derivative on both members of \eqref{eq Brownian meander t 2} we obtain that
\begin{align}\label{cp pde Brownian meander}
\frac{1}{2}\frac{\partial^2}{\partial x^2}q(x,t)p(0,t) = -\frac{1}{2}\frac{\partial^3}{\partial x^3}p(x,t).
\end{align}
From the fact that the density function $p$ satisfies heat equation
\begin{equation}\label{Heat equation}
\frac{1}{2}\frac{\partial^2 p}{\partial x^2} = \frac{\partial p}{\partial t}
\end{equation}
and from \eqref{eq Brownian meander t 2}, we can see that the right hand side of \eqref{cp pde Brownian meander} can be written as 
\begin{align*}
&-\frac{1}{2}\frac{\partial^3}{\partial x^3}p(x,t) = \frac{\partial}{\partial t}\Big\{-\frac{\partial}{\partial x}p(x,t)\Big\} =\frac{\partial}{\partial t}q(x,t)p(0,t)+q(x,t)\frac{\partial}{\partial t}p(0,t) =p(0,t)\Big\{\frac{\partial}{\partial t}q(x,t)-q(x,t)\frac{1}{2t}\Big\}
\end{align*}
where we used the fact $\frac{\partial}{\partial t}p(0,t) = -\frac{1}{2 t}p(0,t)$. Therefore, equation \eqref{cp pde Brownian meander} becomes 
\begin{equation*}
\frac{1}{2}\frac{\partial^2}{\partial x^2}p(0,t)q(x,t) =  p(0,t)\frac{\partial }{\partial t}q(x,t)-\frac{1}{2t}p(0,t)q(x,t),
\end{equation*}
that is \eqref{pde Brownian meander 2}.
\end{proof}
Relationship \eqref{Telegraph meander t 4} and the fact that $p_{\lambda,c}^{-}$ satisfies the telegraph equation suggest to apply the same technique exploited in the proof of previous theorem in order to derive the equation that governs the law of the telegraph meander. Indeed, in the next theorem it is proved that the law of the telegraph meander satisfies an hyperbolic equation in which is contained the telegraph operator $\frac{\partial^2}{\partial t^2}-c^2\frac{\partial^2}{\partial x^2}+2\lambda\frac{\partial}{\partial t}$.
\begin{theorem}
Let $T^+ = (T^+(s))_{s \in [0,t]}$ be a telegraph meander. Then the absolutely continuous component of the law of $T^+(t)$ satisfies 
\begin{equation}\label{pde telegraph meander}
\frac{\partial^2u}{\partial t^2}-c^2\frac{\partial^2 u}{\partial x^2}+2\lambda\frac{\partial u}{\partial  t}-\frac{2a}{t}\frac{\partial u}{\partial t}=\Big(\frac{\lambda}{t}-\frac{2a}{ t^2}\Big)u
\end{equation}
where 
\begin{equation}
a(t) = \frac{I_1(\lambda t)}{I_0(\lambda t)+I_1(\lambda t)}.
\end{equation}
\end{theorem}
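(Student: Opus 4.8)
The plan is to adapt the argument used for the Brownian meander in the preceding proposition, starting from the representation \eqref{Telegraph meander t 4}. Set $g(t):=p^{-}_{\lambda,c}(0,t)=\frac{\lambda}{2c}e^{-\lambda t}(I_0(\lambda t)+I_1(\lambda t))$, so that \eqref{Telegraph meander t 4} reads $q_{\lambda,c}(x,t)\,g(t)=-\frac{\partial}{\partial x}p^{-}_{\lambda,c}(x,t)$ for $x\in(0,ct)$. The density $p^{-}_{\lambda,c}$ solves the telegraph equation \eqref{Telegrapher's equation} on the region $\{|x|<ct\}$ (see \cite{O 1995}, or note it follows at once from \eqref{p minus 2} and \eqref{Orsingher 90}); since the telegraph operator $\frac{\partial^2}{\partial t^2}-c^2\frac{\partial^2}{\partial x^2}+2\lambda\frac{\partial}{\partial t}$ has constant coefficients, it commutes with $\frac{\partial}{\partial x}$, so $w(x,t):=-\frac{\partial}{\partial x}p^{-}_{\lambda,c}(x,t)=q_{\lambda,c}(x,t)\,g(t)$ is again a solution of the telegraph equation on $\{0<x<ct\}$.

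Next I would substitute $w=q_{\lambda,c}\,g$ into $\frac{\partial^2 w}{\partial t^2}-c^2\frac{\partial^2 w}{\partial x^2}+2\lambda\frac{\partial w}{\partial t}=0$. Using $\partial_t^2(qg)=(\partial_t^2 q)g+2(\partial_t q)g'+qg''$ and $\partial_x^2(qg)=(\partial_x^2 q)g$ and dividing through by $g(t)>0$ gives
\begin{equation*}
\frac{\partial^2 q_{\lambda,c}}{\partial t^2}-c^2\frac{\partial^2 q_{\lambda,c}}{\partial x^2}+2\lambda\frac{\partial q_{\lambda,c}}{\partial t}+2\frac{g'(t)}{g(t)}\frac{\partial q_{\lambda,c}}{\partial t}+\left(\frac{g''(t)}{g(t)}+2\lambda\frac{g'(t)}{g(t)}\right)q_{\lambda,c}=0 .
\end{equation*}
It then remains only to identify the two coefficients built out of $g$. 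Differentiating $\log g(t)=\log\frac{\lambda}{2c}-\lambda t+\log(I_0(\lambda t)+I_1(\lambda t))$ and using $\frac{d}{dt}I_0(\lambda t)=\lambda I_1(\lambda t)$ together with \eqref{properties derivative Bessel} in the form $\frac{d}{dt}I_1(\lambda t)=\lambda I_0(\lambda t)-\frac1t I_1(\lambda t)$, a short computation yields $\frac{g'(t)}{g(t)}=-\frac{a(t)}{t}$, so the fourth term above is exactly $-\frac{2a}{t}\frac{\partial q_{\lambda,c}}{\partial t}$.

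For the last coefficient I would write $\frac{g''}{g}=\left(\frac{g'}{g}\right)'+\left(\frac{g'}{g}\right)^2$ and use $\frac{g'}{g}=-\frac{a}{t}$ to get
\begin{equation*}
\frac{g''(t)}{g(t)}+2\lambda\frac{g'(t)}{g(t)}=-\frac{a'(t)}{t}+\frac{a(t)}{t^2}+\frac{a(t)^2}{t^2}-\frac{2\lambda a(t)}{t}.
\end{equation*}
Thus the assertion reduces to the single Riccati-type identity $t\,a'(t)=a(t)^2-a(t)-2\lambda t\,a(t)+\lambda t$, which I would verify directly: differentiating $a=I_1/(I_0+I_1)$ (all Bessel functions at argument $\lambda t$) with the same two derivative rules gives $t\,a'=\frac{\lambda t(I_0-I_1)}{I_0+I_1}-\frac{I_0 I_1}{(I_0+I_1)^2}$, while $a(a-1)-\lambda t(2a-1)=-\frac{I_0 I_1}{(I_0+I_1)^2}+\frac{\lambda t(I_0-I_1)}{I_0+I_1}$, and the two expressions coincide. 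Feeding this identity back into the display yields $\frac{g''}{g}+2\lambda\frac{g'}{g}=\frac{2a}{t^2}-\frac{\lambda}{t}=-\left(\frac{\lambda}{t}-\frac{2a}{t^2}\right)$, and substituting this together with $\frac{g'}{g}=-\frac{a}{t}$ into the equation for $q_{\lambda,c}$ produces exactly \eqref{pde telegraph meander}.

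The only genuine computation here will be the Bessel bookkeeping of the last paragraph — establishing $\frac{g'}{g}=-a/t$ and the identity $t\,a'=a^2-a-2\lambda t\,a+\lambda t$; everything else is the product rule and the commutation of the constant-coefficient telegraph operator with $\partial_x$. One should also record that the equation is asserted only for the absolutely continuous component on $\{0<x<ct\}$, which is precisely the range on which \eqref{Telegraph meander t 4} holds, so the atom of $T^+(t)$ at $x=ct$ causes no difficulty.
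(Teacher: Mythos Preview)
Your proposal is correct and follows essentially the same approach as the paper: both start from $q_{\lambda,c}\,p^{-}_{\lambda,c}(0,t)=-\partial_x p^{-}_{\lambda,c}$, apply the telegraph operator, and reduce the problem to computing the time derivatives of $p^{-}_{\lambda,c}(0,t)$. The only difference is organizational: the paper computes $\partial_t p^{-}_{\lambda,c}(0,t)$ and $\partial_t^2 p^{-}_{\lambda,c}(0,t)$ directly and divides by $p^{-}_{\lambda,c}(0,t)$ at the end, whereas you work with the logarithmic derivative $g'/g=-a/t$ and package the second-order computation into the Riccati identity $t\,a'=a^2-a-2\lambda t\,a+\lambda t$, which is a slightly cleaner bookkeeping of the same Bessel calculation.
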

\begin{proof}
We rewrite equation \eqref{Telegraph meander t 4} in a suitable way
\begin{equation}\label{Telegraph meander t 5}
q_{\lambda,c}(x,t)p^{-}_{\lambda,c}(0,t) = -\frac{\partial}{\partial x}p^{-}_{\lambda,c}(x,t).
\end{equation}
Hence, by applying the second time derivative on both member of \eqref{Telegraph meander t 5} we obtain
\begin{align}\label{cp pde Telegraph meander 1}
&\frac{\partial^2}{\partial t^2}q_{\lambda,c}(x,t)p^{-}_{\lambda,c}(0,t)+2\frac{\partial}{\partial t}q_{\lambda,c}(x,t)\frac{\partial}{\partial t}p^{-}_{\lambda,c}(0,t)+q_{\lambda,c}(x,t)\frac{\partial^2}{\partial t^2}p^{-}_{\lambda,c}(0,t) =-\frac{\partial^3}{\partial t^2\partial x}p^{-}_{\lambda,c}(x,t)
\end{align}
Analogously, by taking in both members of \eqref{Telegraph meander t 5} the second order space derivative and also multiplying with $-c^2$, we get
\begin{align}\label{cp pde Telegraph meander 2}
-c^2\frac{\partial^2}{\partial x^2}q_{\lambda,c}(x,t)p^{-}_{\lambda,c}(0,t) = c^2\frac{\partial^3}{\partial x^3}p^{-}_{\lambda,c}(x,t).
\end{align}
We sum up equations \eqref{cp pde Telegraph meander 1} and \eqref{cp pde Telegraph meander 2} and it follows that
\begin{align}\label{cp pde Telegraph meander 3}
&\frac{\partial^2}{\partial t^2}q_{\lambda,c}(x,t)p^{-}_{\lambda,c}(0,t)+2\frac{\partial}{\partial t}q_{\lambda,c}(x,t)\frac{\partial}{\partial t}p^{-}_{\lambda,c}(0,t)+q_{\lambda,c}(x,t)\frac{\partial^2}{\partial t^2}p^{-}_{\lambda,c}(0,t) -c^2\frac{\partial^2}{\partial x^2}q_{\lambda,c}(x,t)p^{-}_{\lambda,c}(0,t)\nonumber\\&=-\frac{\partial}{\partial x}\Big\{\frac{\partial^2}{\partial t^2}p^{-}_{\lambda,c}(x,t)-\frac{\partial^2}{\partial x^2}c^2p^{-}_{\lambda,c}(x,t)\Big\}.
\end{align}
Now, the density function $p^{-}_{\lambda,c}(x,t)$ satisfies the telegraph equation
\begin{equation}\label{Telegraph equation}
\frac{\partial^2}{\partial t^2}p^{-}_{\lambda,c}(x,t) +2\lambda\frac{\partial}{\partial t}p^{-}_{\lambda,c}(x,t) =c^2\frac{\partial^2}{\partial x^2}p^{-}_{\lambda,c}(x,t),
\end{equation}
therefore, by means of \eqref{Telegraph meander t 5} and \eqref{Telegraph equation}, the right hand side of equation \eqref{cp pde Telegraph meander 3} becomes 
\begin{align*}
&2\lambda\frac{\partial^2}{\partial x\partial t}p^{-}_{\lambda,c}(x,t) = -2\lambda \frac{\partial}{\partial t}\Big\{q_{\lambda,c}(x,t)p^{-}_{\lambda,c}(0,t)\Big\}=-2\lambda\big\{\frac{\partial}{\partial t}q_{\lambda,c}(x,t)p^{-}_{\lambda,c}(0,t)+q_{\lambda,c}(x,t)\frac{\partial}{\partial t}p^{-}_{\lambda,c}(0,t)\big\}.
\end{align*}
As a result, equation \eqref{cp pde Telegraph meander 3} takes the form 
\begin{align}\label{cp pde Telegraph meander 5}
&\Big\{\frac{\partial^2}{\partial t^2}q_{\lambda,c}(x,t)-c^2\frac{\partial^2}{\partial x^2}q_{\lambda,c}(x,t)+2\lambda\frac{\partial}{\partial t}q_{\lambda,c}(x,t)\Big\}p^{-}_{\lambda,c}(0,t)+2\frac{\partial}{\partial t}q_{\lambda,c}(x,t)\frac{\partial}{\partial t}p^{-}_{\lambda,c}(0,t)\nonumber \\
&=-q_{\lambda,c}(x,t)\big\{\frac{\partial^2}{\partial t^2}p^{-}_{\lambda,c}(0,t)+2\lambda\frac{\partial}{\partial t}p^{-}_{\lambda,c}(0,t)\big\}.
\end{align}
We now compute the first and second time derivative of $p^{-}_{\lambda,c}(0,t)$. 
\begin{align}
&\frac{\partial}{\partial t}p^{-}_{\lambda,c}(0,t) = -\frac{\lambda^2}{2c}e^{-\lambda t}(I_0(\lambda t)+I_1(\lambda t))+\frac{\lambda^2}{2c}e^{-\lambda t}(I_0(\lambda t)+I_1(\lambda t)-\frac{1}{\lambda t}I_1(\lambda t)) \nonumber\\
&=-\frac{\lambda}{2ct}e^{-\lambda t}I_1(\lambda t) \label{First Derivative p-}\\
&\frac{\partial^2}{\partial t^2}p^{-}_{\lambda,c}(0,t) = \frac{\lambda}{2ct^2}e^{-\lambda t}I_1(\lambda t)+\frac{\lambda^2}{2ct}e^{-\lambda t}I_1(\lambda t)-\frac{\lambda^2}{2ct}e^{-\lambda t}(I_0(\lambda t)-\frac{1}{\lambda t}I_1(\lambda t)) \nonumber\\&=\frac{\lambda}{ct^2}e^{-\lambda t}I_1(\lambda t)+\frac{\lambda^2}{2ct}e^{-\lambda t}(I_1(\lambda t)-I_0(\lambda t)) \label{Second Derivative p-} ,\end{align}
where we made use of \eqref{properties Bessel} and \eqref{properties derivative Bessel}. Finally,
\begin{align}\label{cp Telegraph meander 6}
&\frac{\partial^2}{\partial t^2}p^{-}_{\lambda,c}(0,t)+2\lambda\frac{\partial}{\partial t}p^{-}_{\lambda,c}(0,t) = \frac{\lambda}{2c}e^{-\lambda t}\big\{2t^{-2}I_1(\lambda t)-\lambda t^{-1}\big(I_0(\lambda t)+I_1(\lambda t)\big)\big\}.
\end{align}
Therefore, by dividing with $p^{-}_{\lambda,c}(0,t)$ and substituting \eqref{First Derivative p-}, \eqref{Second Derivative p-} and \eqref{cp Telegraph meander 6} into \eqref{cp pde Telegraph meander 5}, we arrive at
\begin{align*}\label{pde Telegraph meander 2}
&\frac{\partial^2}{\partial t^2}q_{\lambda,c}(x,t) - c^2\frac{\partial^2}{\partial x^2}q_{\lambda,c}(x,t) +2\lambda\frac{\partial}{\partial t}q_{\lambda,c}(x,t)-\frac{2I_1(\lambda t)}{t(I_0(\lambda t)+I_1(\lambda t))} \frac{\partial}{\partial t}q_{\lambda,c}(x,t) \nonumber\\
&=-q_{\lambda,c}(x,t)\Big(\frac{2I_1(\lambda t)}{t^2(I_0(\lambda t)+I_1(\lambda t))}-\frac{\lambda}{t}\Big),
\end{align*}
that is \eqref{pde telegraph meander}.
\end{proof}
Under the so called Kac's scaling condition, namely when $\lambda \to +\infty$ and $c\to +\infty$ such that $c^2/\lambda \to 1$, the telegrapher's equation becomes the heat equation. As one would expect, under the same condition, the equation governing the telegraph meander turns into the equation governing Brownian meander. Indeed, we rewrite equation \eqref{pde telegraph meander} as
\begin{equation*}\label{pde telegraph meander 3}
-\frac{1}{2\lambda}\frac{\partial^2u}{\partial t^2}+\frac{c^2}{2\lambda}\frac{\partial^2 u}{\partial x^2}-\frac{\partial u}{\partial  t}+\frac{a_{\lambda}}{\lambda t}\frac{\partial u}{\partial t}=-\Big(\frac{1}{2t}-\frac{a_{\lambda}}{ \lambda t^2}\Big)u
\end{equation*}
where we added the dependence of $a$ with respect to $\lambda$. Then, from the asymptotic expansion of Bessel functions
\begin{equation}\label{Asymptotic bessel}
I_{\nu}(x) \sim \frac{e^x}{\sqrt{2\pi x}}, \;\; x \to +\infty, \;\; \nu \in \mathbb{R}
\end{equation}
we see that as $\lambda \to +\infty$, $a_\lambda \to 1/2$ and the previous equation becomes the equation governing the law of Brownian meander.
\section{Weak convergence of telegraph meander}
Here we study the convergence of the distribution of the telegraph meander under the Kac's condition by setting $\alpha= \lambda =c^2$. First, we consider the convergence of the finite dimensional distributions. By means of the asymptotic estimate of the Bessel functions, it can be proved that the density function of the telegraph process converges to the density of Brownian motion. The same technique is applied in the next theorem.
\begin{theorem}
Let $T^+ = (T^+(s))_{s \in [0,t]}$ be a telegraph meander. Then, for every $n\ \in\mathbb{N}$, $0<t_1<...<t_n\leq n$, $x_1,...,x_n \in \mathbb{R}$
\begin{equation}
\lim_{\alpha \to +\infty}\mathsf{P}\{T^+_{\alpha}(t_1) \in dx_1,...,T^+_{\alpha}(t_n) \in dx_n\} = \mathsf{P}\{B^+(t_1) \in dx_1,...,B^{+}(t_n) \in dx_n\},
\end{equation}
where $B^+$ is Brownian meander on $[0,t]$.
\end{theorem}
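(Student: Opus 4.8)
The plan is to push the explicit finite dimensional distribution \eqref{fdd} to the limit, factor by factor, under the Kac scaling $\lambda=c^{2}=\alpha$ and the Bessel asymptotics \eqref{Asymptotic bessel} --- the same mechanism already used to pass from $p_{\lambda,c}$ to the Gaussian density. Fix $n$, times $0<t_{1}<\dots<t_{n}<t$ and points $x_{1},\dots,x_{n}>0$ (the cases where some $x_{i}\le 0$ are trivial, both densities vanishing there). For $\alpha$ large every factor appearing in \eqref{fdd} falls into the ``bulk'' regime $0\vee(-x)\le\beta<(cs-x)/2$ of Lemma \ref{lemma joint distribution telegraph minimum with velocity}. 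The elementary input is that, under the scaling, $\tfrac{\lambda}{c}\sqrt{c^{2}s^{2}-x^{2}}=\lambda s-\tfrac{x^{2}}{2s}+o(1)$, whence for each fixed $\nu$
\begin{equation*}
e^{-\lambda s}\,I_{\nu}\Big(\tfrac{\lambda}{c}\sqrt{c^{2}s^{2}-x^{2}}\Big)=\frac{e^{-x^{2}/(2s)}}{\sqrt{2\pi\lambda s}}\big(1+o(1)\big),
\end{equation*}
while every algebraic prefactor of the form $\sqrt{\tfrac{cs\pm x}{cs\mp x}}$ or $\tfrac{cs-x}{cs+x}$ tends to $1$.

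Substituting these into Lemma \ref{lemma joint distribution telegraph minimum with velocity} I expect that, with $p^{0}_{s}(a,b):=\tfrac{1}{\sqrt{2\pi s}}\big(e^{-(b-a)^{2}/(2s)}-e^{-(b+a)^{2}/(2s)}\big)$ the Brownian transition density killed at $0$, one has for every $v',v\in\{-c,c\}$ and every $k=2,\dots,n$
\begin{equation*}
f_{v'}\big(x_{k}-x_{k-1},\,v,\,-x_{k-1},\,t_{k}-t_{k-1}\big)=\tfrac12\,p^{0}_{t_{k}-t_{k-1}}(x_{k-1},x_{k})\,\big(1+o(1)\big),
\end{equation*}
the $\tfrac12$ being the limiting weight of the terminal velocity $v$. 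The first factor needs separate treatment: since $x_{0}=0$ the two reflected Gaussians coincide and one order is lost, so there I would expand $\sum_{v_{1}}f_{c}(x_{1},v_{1},0,t_{1})$ straight from the already available \eqref{joint distribution telegraph process and minimum}, obtaining $\sqrt{2\pi\lambda t_{1}}\,\sum_{v_{1}}f_{c}(x_{1},v_{1},0,t_{1})\to\tfrac{2x_{1}}{t_{1}}e^{-x_{1}^{2}/(2t_{1})}$. For the survival factors, integrating \eqref{minimum positive}--\eqref{minimum negative} (the density of the minimum being, up to the vanishing atom, $2p_{\lambda,c}$ when $V(0)=-c$ and $\tfrac{2}{\sqrt{2\pi s}}e^{-y^{2}/(2s)}(1+o(1))$ when $V(0)=c$) yields $m_{v}(-x_{n},t-t_{n})\to 2\Phi\big(\tfrac{x_{n}}{\sqrt{t-t_{n}}}\big)-1$ for both $v$, where $\Phi$ denotes the standard Gaussian distribution function, while \eqref{Asymptotic bessel} gives $\sqrt{2\pi\lambda t}\,m_{c}(0,t)=\sqrt{2\pi\lambda t}\,e^{-\lambda t}\big(I_{0}(\lambda t)+I_{1}(\lambda t)\big)\to 2$.

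To assemble these I would group \eqref{fdd} as $\tfrac{1}{m_{c}(0,t)}\sum_{v_{1}}f_{c}(x_{1},v_{1},0,t_{1})\,B_{\alpha}(v_{1})$, where $B_{\alpha}(v_{1})$ is the remaining (finite) sum over $v_{2},\dots,v_{n}$ of a product of the quantities just described; being a finite sum of finite products of quantities each converging to an $O(1)$ limit that does not depend on any velocity, $B_{\alpha}(v_{1})\to\prod_{k=2}^{n}p^{0}_{t_{k}-t_{k-1}}(x_{k-1},x_{k})\cdot\big(2\Phi(\tfrac{x_{n}}{\sqrt{t-t_{n}}})-1\big)$ for both values of $v_{1}$, the $2^{n-1}$ terms of the inner sum each tending to $2^{1-n}$ times this product. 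Multiplying by the surviving ratio $\sqrt{2\pi\lambda t}/\sqrt{2\pi\lambda t_{1}}\to\sqrt{t/t_{1}}$ and by $1/(\sqrt{2\pi\lambda t}\,m_{c}(0,t))\to\tfrac12$, the powers of $\alpha$ cancel and the limit of \eqref{fdd} is
\begin{equation*}
\sqrt{\tfrac{t}{t_{1}}}\;\frac{x_{1}}{t_{1}}\,e^{-x_{1}^{2}/(2t_{1})}\prod_{k=2}^{n}p^{0}_{t_{k}-t_{k-1}}(x_{k-1},x_{k})\;\Big(2\Phi\big(\tfrac{x_{n}}{\sqrt{t-t_{n}}}\big)-1\Big),
\end{equation*}
which is precisely the finite dimensional density of the Brownian meander on $[0,t]$ (a classical fact; cf.\ \cite{DI 1977}); one can also verify this identity directly by running the $\epsilon\downarrow0$ construction of Theorem \ref{Theorem telegraph meander t} for Brownian motion, i.e.\ realizing $B^{+}$ as the $\epsilon$-limit of Brownian motion started at $\epsilon$ conditioned to stay nonnegative on $[0,t]$.

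The step I expect to be the main obstacle is the bookkeeping of the powers of $\alpha$: one must recognize that $\sum_{v_{1}}f_{c}(x_{1},v_{1},0,t_{1})$ decays like $\alpha^{-1/2}$ (at the first factor the barrier coincides with the starting point, the leading Bessel order is killed, and an algebraic prefactor of size $1/c$ survives) whereas the normalizing constant $1/m_{c}(0,t)$ diverges like $\alpha^{1/2}$, so that these two must be paired to leave a finite nonzero limit, every other factor being $O(1)$. The secondary difficulty is to read off honestly from Lemma \ref{lemma joint distribution telegraph minimum with velocity} that the limit of $f_{v'}(\,\cdot\,,v,\,\cdot\,,\,\cdot\,)$ sees neither $v'$ nor $v$ and that the limit of $m_{v}(\,\cdot\,,\,\cdot\,)$ does not see $v$: this ``washing out'' of the velocity labels is what collapses the $2^{n}$-fold velocity sum into the velocity-free Markov kernel of the Brownian meander, and, although expected (in the Kac limit the velocity flips on an infinitesimal time scale), it has to be checked from the explicit formulas rather than merely invoked.
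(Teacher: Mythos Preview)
Your proposal is correct and follows essentially the same route as the paper's proof: start from the explicit product formula \eqref{fdd}, apply the Bessel asymptotics \eqref{Asymptotic bessel} to each factor of Lemma \ref{lemma joint distribution telegraph minimum with velocity} and to \eqref{minimum positive}--\eqref{minimum negative}, observe that the limits are independent of the velocity labels so that the $2^{n}$-fold sum collapses, and identify the result with the Brownian meander finite-dimensional density. The only cosmetic difference is that the paper pairs the first transition factor with the normalizing constant $m_{c}(0,t)$ as a single ratio before passing to the limit, whereas you scale each by $\sqrt{2\pi\lambda t_{1}}$ and $\sqrt{2\pi\lambda t}$ separately and then take the quotient; the bookkeeping of the $\alpha^{\pm 1/2}$ factors that you flag as the ``main obstacle'' is precisely what this pairing handles.
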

\begin{proof}
By resorting to the asymptotic estimate of the Bessel functions (formula \eqref{Asymptotic bessel}) and applying it on \eqref{minimum positive}, \eqref{minimum negative} and the law expressed in Lemma \ref{lemma joint distribution telegraph minimum with velocity}, after easy computations is it possible to see that, for $v \in \{-\sqrt{\alpha},\sqrt{\alpha}\}$
\begin{align}
&\lim_{\alpha \to +\infty}\mathsf{P}\{\min_{0\leq s \leq t}T_\alpha(s) \geq -\beta|V(0) = v\} = \int_0^\beta\frac{2e^{-\frac{x^2}{2t}}}{\sqrt{2\pi t}}dx = \mathsf{P}\{\min_{0\leq s \leq t}B(s) \geq -\beta\},\\
&\lim_{\alpha \to +\infty}\mathsf{P}\{T_\alpha(t) \in dx,V(t) = v,\min_{0\leq s \leq t}T_\alpha(s) \geq -\beta|V(0) = v\} \nonumber\\&=\frac{1}{2}\bigg\{\frac{e^{-\frac{x^2}{2t}}}{\sqrt{2\pi t}}-\frac{e^{-\frac{(2\beta+x)^2}{2t}}}{\sqrt{2\pi t}}\bigg\}\mathsf{1}_{[-x\lor 0,+\infty)}(\beta)dx =\frac{1}{2}\mathsf{P}\{B(t) \in dx,\min_{0\leq s\leq t}B(s)\geq -\beta\},\\
&\lim_{\alpha \to +\infty}\frac{\mathsf{P}\{T_\alpha(s) \in dx,V(s) = v,\min_{0\leq u \leq s}T_\alpha(u) \geq 0|V(0) = \sqrt{\alpha}\}}{\mathsf{P}\{\min_{0\leq s \leq t}T_\alpha(s) \geq 0|V(0) = \sqrt{\alpha}\}} = \frac{1}{2}\frac{x}{s}e^{-\frac{x^2}{2s}}\sqrt{\frac{t}{s}}\mathsf{1}_{[0,+\infty)}(x)dx.
\end{align}
Now, by taking the limit of \eqref{fdd} and applying the previous results we get, for $x_1,...,x_n>0$,
\begin{align*}
&\lim_{\alpha \to +\infty}\mathsf{P}\{T^{+}_\alpha(t_1) \in dx_1,...,T^{+}_\alpha(t_n) \in dx_n\} \\
&= \frac{x_1}{t_1}e^{-\frac{x_1^2}{2t_1}}\sqrt{\frac{t}{s_1}}dx_1\prod_{k=2}^n\big(\frac{e^{-\frac{(x_{k}-x_{k-1})^2}{2(t_k-t_{k-1})}}-e^{-\frac{(x_{k}+x_{k-1})^2}{2(t_k-t_{k-1})}}}{\sqrt{2\pi(t_k-t_{k-1})}}\big)dx_k\int_{0}^{x_n}\frac{2e^{-\frac{y^2}{2(t-t_n)}}}{\sqrt{2\pi(t-t_n)}}dy,
\end{align*}
which is the joint distribution of Brownian meander. 
\end{proof}
We now establish a stronger result concerning the weak convergence of the probability measure induced by the telegraph meander to that induced by Brownian meander. Our aim is to provide a conditional version of the weak convergence of the telegraph process to Brownian motion on the set $C^+[0,t]= \{x \in C[0,t]: x\geq 0\}$. Our approach is based on the following equivalent representation of the telegraph process
\begin{equation}\label{Telegraph process definiton 2}
T_\alpha(t) = \frac{1}{\sqrt{\alpha}}\sum_{k = 1}^{\nu(\alpha t)}\xi_0(-1)^{k-1}\xi_k+\frac{\xi_0(-1)^{\nu(\alpha t)}}{\sqrt{\alpha}}(\alpha t-S_{\nu(\alpha t)}), \;\; t \geq 0,
\end{equation}
where $\nu = (\nu(t))_{t \geq 0}$ is a Poisson process of unitary rate, $\xi_0 \in \{-1,1\}$ uniformly and independent of $\nu$, $\{\xi_k\}_{k \geq 1}$ are the i.i.d. exponential inter-arrivals of parameter one associated with $\nu$, and $S_{\nu(\alpha t)} = \sum_{k=1}^{\nu(\alpha t)}\xi_k$. 

Equation \eqref{Telegraph process definiton 2} shows that the telegraph processes can be expressed as a sum of two processes, the first one represents a random walk with a random number of summands, while the second one can be viewed as an interpolating term. Therefore, our strategy is essentially based on proving that a functional conditional central limit theorem holds for the first term and the second term converges to zero as $\alpha \to +\infty$. In such a way, we extend the random function $T_\alpha$ from $C = C[0,1]$ to $D = D[0,1]$, the space of cadlag functions on $[0,1]$ endowed with the Skorohod topology. For a description of this space and its metric topology, one may refer to Chapter 3 of \cite{Billingsley}. We indicate with $\mathcal{C}$ and $\mathcal{D}$, the sigma algebras generated by the uniform and Skorohod topology respectively. The identity map $i:C\to D$ is continuous, hence $\mathcal{D}/\mathcal{C}$ measurable. Then, $i\circ T_{\alpha}$ represents a telegraph process defined on $(D,\mathcal{D})$. For simplicity we denote by $T_\alpha$ the random element on the set of cadlag functions. Let $D^+ = \{x\in D: x\geq 0\}$ be the set of non negative cadlag functions and $\Gamma_{\alpha} = \{T_\alpha \in D^+\}$. Then $T^+_\alpha$ is the restriction of $T_\alpha$ on $\Gamma_\alpha$, namely, the telegraph meander. Hence, $T^+_\alpha: \Gamma_\alpha \to D^+$ is a random function on $(D^+,\mathcal{D^+})$, where $\mathcal{D^+} = D^+ \cap \mathcal{D}$. Let 
\begin{equation}
\hat{T}_{\alpha}(t) = \frac{1}{\sqrt{\alpha}}\sum_{k=1}^{\nu(\alpha t)}\xi_0(-1)^{k-1}\xi_k
\end{equation}
and
\begin{equation}
\Tilde{T}_{n}(t) = \frac{1}{\sqrt{n}}\sum_{k=1}^{\lfloor n t\rfloor}\xi_0(-1)^{k-1}\xi_k \;\; n \in \mathbb{N}
\end{equation}
then, the random variables $\xi_0(-1)^{k-1}\xi_k$ are i.i.d. such that $\mathsf{E}[\xi_0(-1)^{k-1}\xi_k] = 0$, $\mathsf{V}[\xi_0(-1)^{k-1}\xi_k] = 1$. Let also $\tau_0 = \inf\{n \in \mathbb{N}: \Tilde{T}_{n}(1) < 0\}$ and set with \mbox{$\hat{\Gamma}_{\alpha} = \{\tau_0 > \nu(\alpha)\} = \{\hat{T}_\alpha \in D^+\}$}. If $\hat{T}^+_{\alpha}$ is the restriction of $\hat{T}_\alpha$ on $\hat{\Gamma}_{\alpha}$, the hypothesis of Theorem $4.8$ of \cite{I 1974} are clearly satisfied and therefore we can claim that $\hat{T}^+_{\alpha} \Rightarrow B^{+}$ on $D^+$. This is equivalent to state that, for every $A \in \mathcal{D}^+$, such that $\mathsf{P}\{B^+ \in \partial A\} = 0$,
\begin{equation}\label{weak convergence T hat+}
\lim_{\alpha \to +\infty}\mathsf{P}\{\hat{T}_\alpha \in A|\hat{T}_\alpha \in D^+\} = \mathsf{P}\{B^+ \in A\}.
\end{equation}
Our task is to show that from \eqref{weak convergence T hat+} we can derive the weak convergence of $T_{\alpha}^+$ to $B^+$. This can be made with some preliminary results. 
\begin{lemma}\label{Lemma 1 weak convergence}
Let $\delta_0$ be the unit mass at $0$, the identically null function. For every $A \in \mathcal{D^+}$ such that $\delta_0(\partial A) = 0$, it holds 
\begin{align}
&\lim_{\alpha \to +\infty}\mathsf{P}\{T_\alpha-\hat{T}_\alpha \in A|T_\alpha \in D^+\} = \lim_{\alpha \to +\infty}\mathsf{P}\{T_\alpha-\hat{T}_\alpha \in A|\hat{T}_\alpha \in D^+\} = \delta_0(A)
\end{align}
\end{lemma}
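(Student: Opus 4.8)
The plan is to prove that, under either of the two conditionings, the interpolation remainder $R_\alpha:=T_\alpha-\hat T_\alpha$ degenerates to the null function; by the portmanteau theorem this is precisely the assertion of the lemma at every $A$ with $\delta_0(\partial A)=0$. Writing $\mathsf Q_\alpha$ for either $\mathsf P(\,\cdot\mid T_\alpha\in D^+)$ or $\mathsf P(\,\cdot\mid\hat T_\alpha\in D^+)$, I would first reduce matters to the uniform bound
\[
\mathsf Q_\alpha\bigl\{\textstyle\sup_{0\le s\le 1}|R_\alpha(s)|>\eta\bigr\}\longrightarrow 0\qquad\text{for every }\eta>0 ,
\]
since the Skorohod distance is dominated by the uniform distance on $[0,1]$ and the limit $0$ is continuous, so this bound yields $R_\alpha\Rightarrow\delta_0$ under $\mathsf Q_\alpha$ and hence $\mathsf Q_\alpha\{R_\alpha\in A\}\to\delta_0(A)$ whenever $\delta_0(\partial A)=0$.

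Next I would estimate $\sup_s|R_\alpha(s)|$. From \eqref{Telegraph process definiton 2} one has $R_\alpha(t)=\xi_0(-1)^{\nu(\alpha t)}\bigl(\alpha t-S_{\nu(\alpha t)}\bigr)/\sqrt\alpha$, hence $\sup_{0\le s\le1}|R_\alpha(s)|\le M_\alpha/\sqrt\alpha$, where $M_\alpha:=\sup_{0\le u\le\alpha}\bigl(u-S_{\nu(u)}\bigr)$ is the largest age attained by $\nu$ on $[0,\alpha]$. I would bound $M_\alpha$ by a covering argument: on $\{M_\alpha>x\}$ some subinterval of $[0,\alpha]$ of length $x$ contains no arrival of $\nu$, so one of the $\lceil 2\alpha/x\rceil$ cells of the equipartition of $[0,\alpha]$ into cells of length $x/2$ is arrival-free, whence $\mathsf P\{M_\alpha>x\}\le\lceil 2\alpha/x\rceil e^{-x/2}$. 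Choosing $x=\eta\sqrt\alpha$ gives $\mathsf P\{M_\alpha>\eta\sqrt\alpha\}\le\lceil 2\sqrt\alpha/\eta\rceil e^{-\eta\sqrt\alpha/2}$, which decays faster than any power of $\alpha$.

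The remaining ingredient is a lower bound, by a negative power of $\alpha$, on the probabilities of the conditioning events. For $T_\alpha$ this is explicit: since starting from $V(0)=-\sqrt\alpha$ the path is a.s.\ strictly negative immediately after time $0$, the event $\{\min_{0\le s\le 1}T_\alpha(s)\ge 0\}$ coincides up to a null set with the same event on $\{V(0)=\sqrt\alpha\}$, and then \eqref{Minimum non negative} (with $\lambda=\alpha$, $t=1$) and \eqref{Asymptotic bessel} give $\mathsf P\{T_\alpha\in D^+\}=\tfrac12 e^{-\alpha}\bigl(I_0(\alpha)+I_1(\alpha)\bigr)\sim(2\pi\alpha)^{-1/2}$. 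For $\hat T_\alpha$, the event $\hat\Gamma_\alpha=\{\tau_0>\nu(\alpha)\}$ is the persistence event of a mean-zero, finite-variance, symmetric random walk run for $\nu(\alpha)\sim\alpha$ steps; I would obtain $\mathsf P\{\hat T_\alpha\in D^+\}\ge c\,\alpha^{-1/2}$ for large $\alpha$ by restricting to $\{\xi_0=1\}$ (on which the odd partial sums are nonnegative as soon as the even ones are), passing to the embedded i.i.d.\ walk $W_{2m}=\sum_{l\le m}(\xi_{2l-1}-\xi_{2l})$, and invoking the Sparre--Andersen persistence estimate for it together with a large-deviation bound on $\nu(\alpha)$ (this estimate is anyway implicit in Theorem $4.8$ of \cite{I 1974}). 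Combining, since $\{\sup_s|R_\alpha(s)|>\eta\}\subseteq\{M_\alpha>\eta\sqrt\alpha\}$ and $\{M_\alpha>\eta\sqrt\alpha\}$ does not involve the conditioning, writing $E_\alpha$ for the conditioning event I would conclude
\[
\mathsf Q_\alpha\bigl\{\textstyle\sup_s|R_\alpha(s)|>\eta\bigr\}\ \le\ \frac{\mathsf P\{M_\alpha>\eta\sqrt\alpha\}}{\mathsf P\{E_\alpha\}}\ \le\ \frac{\lceil 2\sqrt\alpha/\eta\rceil e^{-\eta\sqrt\alpha/2}}{c\,\alpha^{-1/2}}\ \longrightarrow\ 0 ,
\]
which, with the reduction of the first paragraph, finishes the proof.

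The part I expect to be the real obstacle is precisely the lower bound $\mathsf P\{\hat T_\alpha\in D^+\}\ge c\,\alpha^{-1/2}$: because the upper tail of $M_\alpha$ is super-polynomially small, any power-law lower bound on $\mathsf P\{E_\alpha\}$ would do, so the difficulty is entirely in making the random-walk persistence estimate rigorous --- one must cope with the alternating-sign (odd/even) structure of the increments $\xi_0(-1)^{k-1}\xi_k$, which are neither independent nor exchangeable even though their even-indexed partial sums form an i.i.d.\ symmetric, absolutely continuous walk, and with the Poissonian randomization of the number of steps. The $T_\alpha$-side, by contrast, is completely explicit via \eqref{Minimum non negative} and \eqref{Asymptotic bessel}.
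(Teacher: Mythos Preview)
Your proposal is correct and follows essentially the same route as the paper: reduce to showing that the conditional probability of $\{\sup_s|R_\alpha(s)|>\eta\}$ vanishes, invoke the $\alpha^{-1/2}$ asymptotics for the conditioning events (the paper simply cites \eqref{Minimum non negative} and Lemma~4.2 of \cite{I 1974} rather than sketching the persistence argument you outline), and control the remainder by the maximal age/inter-arrival on $[0,\alpha]$. The only technical variation is that the paper bounds $\sup_s|R_\alpha(s)|\le\alpha^{-1/2}\max_{k\le\nu(\alpha)+1}\xi_k$ and handles the tail via a union bound plus a truncation on $\nu(\alpha)$ and a third-moment Markov inequality, whereas your covering argument gives a cleaner exponential tail---both suffice since only a rate beating $\alpha^{-1/2}$ is needed.
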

\begin{proof}
According to formula \eqref{Minimum non negative} and Lemma $4.2$ of \cite{I 1974},
\begin{align*}
&\mathsf{P}\{\hat{T}_{\alpha} \in D^+\} \sim \frac{C}{\sqrt{\pi\alpha}} \;\;\;\;\;\; \mathsf{P}\{{T}_{\alpha} \in D^+\} \sim \frac{1}{\sqrt{2\pi\alpha}} \;\; \alpha \to +\infty
\end{align*}
where $C$ is a positive constant. Therefore, the thesis will follow if we prove that, 
\begin{equation}\label{T-T hat converges to 0}
\lim_{\alpha \to +\infty}\sqrt{\alpha}\mathsf{P}\{\sup_{t \in [0,1]}|T_\alpha(t)-\hat{T}_\alpha(t)|\geq \epsilon\} = 0 \;\; \epsilon>0.
\end{equation}
Now, by looking at formula \eqref{Telegraph process definiton 2}, the random variable $\alpha t- S_{\nu(\alpha t)}$ defines the so called current life of the renewal process $\nu$, and we can see that is less than $\xi_{\nu(\alpha t)+1}$, the $(\nu(\alpha t)+1)-$th inter-arrival. Thus, we can state the following inequality
\begin{align}\label{current life}
\sup_{t \in [0,1]}|T_\alpha(t)-\hat{T}_\alpha(t)|\leq \frac{1}{\sqrt{\alpha}}\sup_{t \in [0,1]}\xi_{\nu(\alpha t)+1} = \frac{1}{\sqrt{\alpha}}\max_{k=1,...,\nu(\alpha)}\xi_{k+1}.
\end{align}
Hence, by Markov's inequality, for every $\epsilon>0$ and $\eta>0$,
\begin{align*}
&\sqrt{\alpha}\mathsf{P}\{\sup_{t \in [0,1]}|T_\alpha(t)-\hat{T}_\alpha(t)|\geq \epsilon\} \leq \sqrt{\alpha}\mathsf{P}\{\max_{k=1,...,\nu(\alpha)}\xi_{k+1}\geq \sqrt{\alpha}\epsilon\} \\
&\leq \sqrt{\alpha}\mathsf{P}\{\max_{k=1,...,\nu(\alpha)}\xi_{k+1}\geq \sqrt{\alpha}\epsilon, |\nu(\alpha) -\alpha| \leq\alpha\eta\}+\sqrt{\alpha}\mathsf{P}\{|\nu(\alpha) -\alpha| >\alpha\eta\}\\
&\leq \sqrt{\alpha}\mathsf{P}\{\max_{k=1,...,\lfloor\alpha (\eta+1)\rfloor}\xi_{k+1}\geq \sqrt{\alpha}\epsilon\}+\sqrt{\alpha}\mathsf{P}\{|\nu(\alpha) -\alpha| >\alpha\eta\} \leq \frac{\lfloor\alpha (\eta+1)\rfloor}{\alpha\epsilon^3}\mathsf{E}[\xi_{1}^{3}\mathsf{1}_{\{\xi_{1}\geq \sqrt{\alpha}\epsilon\}}] + \frac{1}{\sqrt{\alpha}\eta^2}.
\end{align*}
Because $\xi_{1}$ has finite third moment, \eqref{T-T hat converges to 0} follows from an application of the dominate convergence theorem.
\end{proof}
\begin{lemma}\label{Lemma 2 weak convergence}
The following limit holds
\begin{equation}
\lim_{\alpha \to +\infty}\frac{\mathsf{P}\{T_\alpha \in D^+\}}{\mathsf{P}\{\hat{T}_\alpha \in D^+\}} = 1
\end{equation}
\end{lemma}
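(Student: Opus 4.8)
The starting point is the pathwise inclusion $\{T_\alpha\in D^+\}\subseteq\{\hat T_\alpha\in D^+\}$. Write $W_j=\sum_{k=1}^{j}\xi_0(-1)^{k-1}\xi_k$, so that $W_0=0$, $\tilde T_j(1)=W_j/\sqrt j$, and $\hat T_\alpha(1)=W_{\nu(\alpha)}/\sqrt\alpha$. The continuous path $T_\alpha$ is piecewise linear with breakpoints at the arrival times $S_j/\alpha$ of $(\nu(\alpha s))_{s}$, with $T_\alpha(S_j/\alpha)=W_j/\sqrt\alpha$ for $j=0,1,\dots,\nu(\alpha)$, while $\hat T_\alpha$ is the right--continuous step function through the same nodes. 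Hence $T_\alpha\ge 0$ on $[0,1]$ forces $W_j\ge0$ for all $j\le\nu(\alpha)$, i.e.\ $\hat T_\alpha\in D^+$. Conversely, on $\{\hat T_\alpha\in D^+\}$ all nodes are non-negative, so $T_\alpha$, being linear between consecutive nodes, is non-negative on every completed segment $[S_{j-1}/\alpha,S_j/\alpha]$; the only segment on which $T_\alpha$ can fall below $0$ is the last, incomplete one $(S_{\nu(\alpha)}/\alpha,1]$, on which $T_\alpha$ is monotone, hence attains its minimum over that segment at $t=1$. Since there $T_\alpha(1)=\bigl(W_{\nu(\alpha)}-(\alpha-S_{\nu(\alpha)})\bigr)/\sqrt\alpha$ when the velocity is negative, one gets
\[
\{\hat T_\alpha\in D^+\}\setminus\{T_\alpha\in D^+\}\ \subseteq\ \{\hat T_\alpha\in D^+,\ T_\alpha(1)<0\}\ =\ \{\hat T_\alpha\in D^+,\ W_{\nu(\alpha)}<\alpha-S_{\nu(\alpha)}\}.
\]

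It therefore suffices to prove that the right-hand probability is $o(\alpha^{-1/2})$, because $\mathsf P\{\hat T_\alpha\in D^+\}$ is of exact order $\alpha^{-1/2}$ by the asymptotics recorded in the proof of Lemma~\ref{Lemma 1 weak convergence}, whence $\mathsf P\{T_\alpha\in D^+\}/\mathsf P\{\hat T_\alpha\in D^+\}=1-o(1)$. Put $M_\alpha=\log\alpha$ and let $A(\alpha)=\alpha-S_{\nu(\alpha)}$ denote the current life of $\nu$ at time $\alpha$. Then
\[
\{\hat T_\alpha\in D^+,\ W_{\nu(\alpha)}<A(\alpha)\}\ \subseteq\ \{\hat T_\alpha\in D^+,\ \hat T_\alpha(1)<M_\alpha/\sqrt\alpha\}\ \cup\ \{\hat T_\alpha\in D^+,\ A(\alpha)>M_\alpha\}.
\]
On $\{A(\alpha)>M_\alpha\}$ there is no arrival of $\nu$ in $(\alpha-M_\alpha,\alpha]$, so $\nu(\alpha)=\nu(\alpha-M_\alpha)$ and $\{\hat T_\alpha\in D^+\}$ reduces to $\{\hat T_{\alpha-M_\alpha}\in D^+\}$, which is measurable with respect to $\xi_0$ and the restriction of $\nu$ and its inter-arrivals to $[0,\alpha-M_\alpha]$; by the memoryless property, the event ``no arrival in $(\alpha-M_\alpha,\alpha]$'' is independent of that $\sigma$-field and has probability $e^{-M_\alpha}=\alpha^{-1}$. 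Hence $\mathsf P\{\hat T_\alpha\in D^+,\ A(\alpha)>M_\alpha\}=\alpha^{-1}\mathsf P\{\hat T_{\alpha-\log\alpha}\in D^+\}\le\alpha^{-1}=o(\alpha^{-1/2})$.

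For the remaining event, $\mathsf P\{\hat T_\alpha\in D^+,\ \hat T_\alpha(1)<M_\alpha/\sqrt\alpha\}=\mathsf P\{\hat T_\alpha\in D^+\}\,\mathsf P\{\hat T_\alpha^+(1)<M_\alpha/\sqrt\alpha\}$. The weak convergence $\hat T_\alpha^+\Rightarrow B^+$ on $D^+$ entails convergence of the law of $\hat T_\alpha^+(1)$ to that of $B^+(1)$, which has density $xe^{-x^2/2}$ on $(0,\infty)$ and, in particular, no atom at $0$ and a distribution function continuous at $0$; since $M_\alpha/\sqrt\alpha\to0$, a routine double-limit argument gives $\mathsf P\{\hat T_\alpha^+(1)<M_\alpha/\sqrt\alpha\}\to0$, so this term is $o(\mathsf P\{\hat T_\alpha\in D^+\})=o(\alpha^{-1/2})$ as well. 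Combining the two bounds, $\mathsf P\{\hat T_\alpha\in D^+\}-\mathsf P\{T_\alpha\in D^+\}=o(\alpha^{-1/2})$, and dividing by $\mathsf P\{\hat T_\alpha\in D^+\}\asymp\alpha^{-1/2}$ yields the claim.

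The main obstacle is the structural reduction of the first paragraph: recognising that once the skeleton $\hat T_\alpha$ is non-negative, the interpolant $T_\alpha$ can lose positivity only on the final incomplete segment, and precisely when the terminal value $W_{\nu(\alpha)}$ is smaller than the current life $A(\alpha)$. The only further delicacy is that $W_{\nu(\alpha)}$ and $A(\alpha)$ are not independent; the truncation at $M_\alpha=\log\alpha$ decouples them at the cost of an $O(\alpha^{-1})$ error, and the Brownian-meander limit guarantees that, conditionally on $\hat T_\alpha\in D^+$, the terminal value is of order $\sqrt\alpha$ rather than of order $\log\alpha$.
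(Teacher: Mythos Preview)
Your argument is correct (the displayed ``$=$'' in the first paragraph should be ``$\subseteq$'', since when the terminal velocity is positive $T_\alpha(1)\ge0$ even if $W_{\nu(\alpha)}<A(\alpha)$, but this is harmless for the upper bound you need). It is, however, a genuinely different route from the paper's.

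The paper proceeds symmetrically: since $D^+$ is closed under addition, $\{T_\alpha\in D^+,\ \hat T_\alpha-T_\alpha\in D^+\}\subseteq\{\hat T_\alpha\in D^+\}$ and vice versa, giving a two-sided sandwich for the ratio in terms of $\mathsf{P}\{T_\alpha-\hat T_\alpha\notin D^+\mid \hat T_\alpha\in D^+\}$ and $\mathsf{P}\{\hat T_\alpha-T_\alpha\notin D^+\mid T_\alpha\in D^+\}$; both vanish by Lemma~\ref{Lemma 1 weak convergence} since $\delta_0(D\setminus D^+)=0$. This is very short and uses only the conditional convergence $T_\alpha-\hat T_\alpha\to0$ already established. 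Your approach instead exploits the pathwise inclusion $\{T_\alpha\in D^+\}\subseteq\{\hat T_\alpha\in D^+\}$, locates the entire discrepancy on the last incomplete segment, and controls it via the Poisson memoryless property and the weak convergence $\hat T_\alpha^+\Rightarrow B^+$. What you gain is that your argument does not feed back through the explicit telegraph asymptotics $\mathsf{P}\{T_\alpha\in D^+\}\sim(2\pi\alpha)^{-1/2}$ (which the proof of Lemma~\ref{Lemma 1 weak convergence} invokes); you only need the Iglehart asymptotics for $\mathsf{P}\{\hat T_\alpha\in D^+\}$ and the weak limit of $\hat T_\alpha^+(1)$. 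What the paper gains is brevity and a structure that would transfer unchanged to any interpolation satisfying Lemma~\ref{Lemma 1 weak convergence}, without the geometric analysis of the last segment.
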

\begin{proof}
It is clear that, if $x,y \in D^+$, then $x+y \in D^+$. Thus, we have the following double inequality
\begin{align*}
&\mathsf{P}\{\hat{T}_\alpha \in D^+\} -\mathsf{P}\{\hat{T}_\alpha \in D^+,T_\alpha- \hat{T}_\alpha \notin D^+\} \leq \mathsf{P}\{T_\alpha \in D^+\} 
\leq \mathsf{P}\{\hat{T}_\alpha \in D^+\} + \mathsf{P}\{T_\alpha \in D^+,\hat{T}_\alpha- T_\alpha \notin D^+\},
\end{align*}
from which, it follows that
\begin{align}\label{Lemma 2 inequaility}
&1- \mathsf{P}\{T_\alpha- \hat{T}_\alpha \notin D^+|\hat{T}_\alpha \in D^+\} \leq \frac{\mathsf{P}\{T_\alpha \in D^+\}}{\mathsf{P}\{\hat{T}_\alpha \in D^+\}} \leq \frac{1}{1-\mathsf{P}\{\hat{T}_\alpha- T_\alpha \notin D^+|T_\alpha \in D^+\}}.
\end{align}
By passing to the limit as $\alpha \to +\infty$ on both inequalities of \eqref{Lemma 2 inequaility}, according to Lemma \ref{Lemma 1 weak convergence}, we have
\begin{align*}
1- \delta_0(D\setminus D^+) \leq \lim_{\alpha \to +\infty}\frac{\mathsf{P}\{T_\alpha \in D^+\}}{\mathsf{P}\{\hat{T}_\alpha \in D^+\}} \leq \frac{1}{1-\delta_0(D\setminus D^+)}.
\end{align*}
This is possible because $0 \in \partial D^+$ and hence $\delta_0(\partial(D\setminus D^+)) = 0$. Finally, from $\delta_0(D\setminus D^+) = 0$, the thesis follows.
\end{proof}
Now, we state a slightly modification of Theorem 3.1 of \cite{Billingsley}.
\begin{theorem}\label{Conditional version Billingsley}
Suppose that $(X_n,Y_n)$ are random elements from a probability space $(\Omega,\mathcal{F},\mathsf{P})$ to $S\times S$, where $(S,\rho)$ is an arbitrary metric space. Let $\{\Gamma_n\}$ be a sequence of sets in $\mathcal{F}$ such that $\mathsf{P}\{\Gamma_n\}>0$. Under the condition that
\begin{equation}
\lim_{n \to +\infty}\mathsf{P}\{\rho(X_n,Y_n) \geq \epsilon|\Gamma_n\} = 0
\end{equation}
for every $\epsilon>0$, then
\begin{equation}
\limsup_{n \to +\infty}\mathsf{P}\{X_n \in F |\Gamma_n\} \leq \mathsf{P}\{X \in F\}
\end{equation} 
implies that
\begin{equation}
\limsup_{n\to+\infty}\mathsf{P}\{Y_n \in F|\Gamma_n\} \leq \mathsf{P}\{X \in F\}
\end{equation}
for every closed set $F$ in the topology of $S$.
\end{theorem}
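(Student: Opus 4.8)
The plan is to reproduce, in the conditional setting, the classical ``converging together'' argument that underlies Theorem~3.1 of \cite{Billingsley}. The crucial observation is that for each fixed $n$ the conditional probability $\mathsf{P}\{\cdot\mid\Gamma_n\}$ is a bona fide probability measure on $(\Omega,\mathcal{F})$, so its pushforwards $A\mapsto\mathsf{P}\{X_n\in A\mid\Gamma_n\}$ and $A\mapsto\mathsf{P}\{Y_n\in A\mid\Gamma_n\}$ are probability measures on $S$. Consequently every elementary set-theoretic inequality used in the unconditional proof remains valid once $\mathsf{P}$ is replaced by $\mathsf{P}\{\cdot\mid\Gamma_n\}$, and no joint property of the measures across different indices $n$ is ever needed.

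First I would fix a closed set $F\subseteq S$ and, for $\epsilon>0$, introduce its closed $\epsilon$-halo $F^{\epsilon}=\{s\in S:\rho(s,F)\leq\epsilon\}$, which is closed because $s\mapsto\rho(s,F)$ is continuous. The key deterministic inclusion is
\begin{equation*}
\{Y_n\in F\}\subseteq\{X_n\in F^{\epsilon}\}\cup\{\rho(X_n,Y_n)\geq\epsilon\},
\end{equation*}
since $\rho(X_n,Y_n)<\epsilon$ together with $Y_n\in F$ forces $\rho(X_n,F)<\epsilon$. Conditioning on $\Gamma_n$ and using subadditivity of the conditional probability gives
\begin{equation*}
\mathsf{P}\{Y_n\in F\mid\Gamma_n\}\leq\mathsf{P}\{X_n\in F^{\epsilon}\mid\Gamma_n\}+\mathsf{P}\{\rho(X_n,Y_n)\geq\epsilon\mid\Gamma_n\}.
\end{equation*}
Taking $\limsup_{n\to+\infty}$, the second term vanishes by the standing hypothesis, and applying the assumed $\limsup$ bound to the closed set $F^{\epsilon}$ yields $\limsup_{n}\mathsf{P}\{Y_n\in F\mid\Gamma_n\}\leq\mathsf{P}\{X\in F^{\epsilon}\}$.

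Finally I would let $\epsilon\downarrow0$. Since $F$ is closed, $\bigcap_{\epsilon>0}F^{\epsilon}=F$, and because $\mathsf{P}\{X\in\cdot\}$ is a finite measure, continuity from above gives $\mathsf{P}\{X\in F^{\epsilon}\}\downarrow\mathsf{P}\{X\in F\}$. Combining this with the previous bound produces $\limsup_{n}\mathsf{P}\{Y_n\in F\mid\Gamma_n\}\leq\mathsf{P}\{X\in F\}$, which is the claim. The argument is essentially routine; the only points requiring a moment's care are that the halo $F^{\epsilon}$ be closed (so that the hypothesis on $X_n$ may legitimately be invoked for it) and that the passage $\epsilon\downarrow0$ exploits continuity from above of the \emph{limiting} probability measure $\mathsf{P}\{X\in\cdot\}$ rather than of the conditional ones. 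Thus the main (mild) obstacle is bookkeeping, not any genuine analytic difficulty.
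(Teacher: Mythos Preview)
Your argument is correct and is precisely the standard ``converging together'' proof of Theorem~3.1 in \cite{Billingsley}, transcribed verbatim to the conditional measures $\mathsf{P}\{\cdot\mid\Gamma_n\}$; every step you give is valid, including the two points you flag (closedness of $F^{\epsilon}$ so the hypothesis applies, and continuity from above of the limiting law). Note that the paper itself does not supply a proof of this theorem: it merely states it as ``a slightly modification of Theorem~3.1 of \cite{Billingsley}'' and immediately uses it, so your write-up in fact fills a gap the authors left to the reader.
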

In other words, Theorem \ref{Conditional version Billingsley} says that, conditionally on $\Gamma_n$, if $X_n$ converges weakly and the distance between $X_n$ and $Y_n$ tends to zero, then, conditionally on $\Gamma_n$, $Y_n$ has the same weak limit of $X_n$. 

In our context, Theorem \ref{Conditional version Billingsley}, Lemma \ref{Lemma 1 weak convergence} and \eqref{weak convergence T hat+} implies, conditionally on $\hat{\Gamma}_\alpha$, that $\hat{T}_\alpha$ and $T_\alpha$ have the same weak limit. This means that
\begin{equation}\label{weak convergence T+}
\lim_{\alpha \to +\infty}\mathsf{P}\{T_\alpha \in A|\hat{T}_\alpha \in D^+\} = \mathsf{P}\{B^+ \in A\} 
\end{equation}
for every $A \in \mathcal{D}^+$ such that $\mathsf{P}\{B^+ \in \partial A\} = 0$. 
We are now able to prove the main result of this section.
\begin{theorem}
Let $T_\alpha^+$ be a telegraph meander and $B^+$ be Brownian meander. Then, $T_\alpha^+ \Rightarrow B^+$ on $C^+[0,t]$.
\end{theorem}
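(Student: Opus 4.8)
The plan is to convert the conditional weak convergence of $\hat{T}_\alpha$ given $\hat{\Gamma}_\alpha=\{\hat{T}_\alpha\in D^+\}$, namely \eqref{weak convergence T hat+}, into the weak convergence of $T_\alpha$ given $\Gamma_\alpha=\{T_\alpha\in D^+\}$, and then to lift this from $D^+$ to $C^+$. We may take $t=1$, since time rescaling is a homeomorphism of $C^+[0,t]$ onto $C^+[0,1]$ carrying $T^+_\alpha$ to a (rescaled) telegraph meander to which the $D[0,1]$ construction above applies verbatim. By construction the law of $T^+_\alpha$ is $\mathsf{P}\{T_\alpha\in\cdot\mid\Gamma_\alpha\}$, so the theorem asserts that $\mathsf{P}\{T_\alpha\in\cdot\mid\Gamma_\alpha\}\Rightarrow\mathsf{P}\{B^+\in\cdot\}$, first on $D^+$ and then on $C^+$.

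The first step is the elementary observation that $\Gamma_\alpha\subseteq\hat{\Gamma}_\alpha$. From the representation \eqref{Telegraph process definiton 2} the interpolating term vanishes at every renewal epoch $S_m/\alpha$, $0\le m\le\nu(\alpha)$, so there $T_\alpha$ and $\hat{T}_\alpha$ coincide, while $\hat{T}_\alpha$ is constant equal to $T_\alpha(S_m/\alpha)$ on $[S_m/\alpha,S_{m+1}/\alpha)$; since $S_m/\alpha\le S_{\nu(\alpha)}/\alpha\le1$, it follows that $\min_{[0,1]}\hat{T}_\alpha=\min_{0\le m\le\nu(\alpha)}T_\alpha(S_m/\alpha)\ge\min_{[0,1]}T_\alpha$, and hence $T_\alpha\ge0$ on $[0,1]$ forces $\hat{T}_\alpha\ge0$ on $[0,1]$. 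Combining the inclusion with Lemma~\ref{Lemma 2 weak convergence} and with the asymptotics $\mathsf{P}\{\Gamma_\alpha\}=\mathsf{P}\{T_\alpha\in D^+\}\sim(2\pi\alpha)^{-1/2}$ obtained in the proof of Lemma~\ref{Lemma 1 weak convergence}, we get $\mathsf{P}\{\hat{\Gamma}_\alpha\setminus\Gamma_\alpha\}=\mathsf{P}\{\hat{\Gamma}_\alpha\}-\mathsf{P}\{\Gamma_\alpha\}=\mathsf{P}\{\hat{\Gamma}_\alpha\}\big(1-\mathsf{P}\{\Gamma_\alpha\}/\mathsf{P}\{\hat{\Gamma}_\alpha\}\big)=o\big(\mathsf{P}\{\Gamma_\alpha\}\big)$.

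Next I would show $\hat{T}_\alpha\mid\Gamma_\alpha\Rightarrow B^+$ on $D^+$. For any bounded continuous $g$ on $D^+$ (note $\hat{T}_\alpha\in D^+$ on $\hat{\Gamma}_\alpha\supseteq\Gamma_\alpha$), write $\mathsf{E}[g(\hat{T}_\alpha)\mathsf{1}_{\Gamma_\alpha}]=\mathsf{E}[g(\hat{T}_\alpha)\mathsf{1}_{\hat{\Gamma}_\alpha}]-\mathsf{E}[g(\hat{T}_\alpha)\mathsf{1}_{\hat{\Gamma}_\alpha\setminus\Gamma_\alpha}]$, where $|\mathsf{E}[g(\hat{T}_\alpha)\mathsf{1}_{\hat{\Gamma}_\alpha\setminus\Gamma_\alpha}]|\le\|g\|_\infty\,\mathsf{P}\{\hat{\Gamma}_\alpha\setminus\Gamma_\alpha\}=o(\mathsf{P}\{\Gamma_\alpha\})$; dividing by $\mathsf{P}\{\Gamma_\alpha\}$ and using \eqref{weak convergence T hat+} together with Lemma~\ref{Lemma 2 weak convergence} gives $\mathsf{E}[g(\hat{T}_\alpha)\mid\Gamma_\alpha]=\mathsf{E}[g(\hat{T}_\alpha)\mid\hat{\Gamma}_\alpha]\,\mathsf{P}\{\hat{\Gamma}_\alpha\}/\mathsf{P}\{\Gamma_\alpha\}+o(1)\to\mathsf{E}[g(B^+)]$, hence in particular $\limsup_\alpha\mathsf{P}\{\hat{T}_\alpha\in F\mid\Gamma_\alpha\}\le\mathsf{P}\{B^+\in F\}$ for every closed $F\subseteq D^+$. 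I would then apply Theorem~\ref{Conditional version Billingsley} with $X_\alpha=\hat{T}_\alpha$, $Y_\alpha=T_\alpha$ and the sets $\Gamma_\alpha$, on which both paths lie in $D^+$: since the Skorohod metric $\rho$ is dominated by the uniform one, \eqref{T-T hat converges to 0} and $\mathsf{P}\{\Gamma_\alpha\}\sim(2\pi\alpha)^{-1/2}$ give $\mathsf{P}\{\rho(\hat{T}_\alpha,T_\alpha)\ge\epsilon\mid\Gamma_\alpha\}\le\mathsf{P}\{\sup_{[0,1]}|\hat{T}_\alpha-T_\alpha|\ge\epsilon\}/\mathsf{P}\{\Gamma_\alpha\}\to0$, so the hypotheses of the theorem are met and $\limsup_\alpha\mathsf{P}\{T_\alpha\in F\mid\Gamma_\alpha\}\le\mathsf{P}\{B^+\in F\}$ for every closed $F\subseteq D^+$. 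As both members are probability measures on $D^+$, the portmanteau theorem yields $\mathsf{P}\{T_\alpha\in\cdot\mid\Gamma_\alpha\}\Rightarrow\mathsf{P}\{B^+\in\cdot\}$, i.e.\ $T^+_\alpha\Rightarrow B^+$ on $D^+$.

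Finally I would lift this to $C^+$. Each $T^+_\alpha$ has piecewise linear, hence continuous, paths and $B^+$ is continuous, so by Skorohod's representation theorem one may realize $T^+_\alpha\to B^+$ almost surely in $D^+$ on a single probability space; since $J_1$-convergence to a continuous limit is uniform convergence, $T^+_\alpha\to B^+$ uniformly almost surely, that is in $C^+[0,1]$ almost surely, and therefore $T^+_\alpha\Rightarrow B^+$ on $C^+[0,1]\cong C^+[0,t]$. The step I expect to be the crux is the change of conditioning event in the second and third paragraphs: it rests entirely on the exact inclusion $\Gamma_\alpha\subseteq\hat{\Gamma}_\alpha$ and on $\mathsf{P}\{\hat{\Gamma}_\alpha\setminus\Gamma_\alpha\}=o(\mathsf{P}\{\Gamma_\alpha\})$, without which the symmetric difference of $\Gamma_\alpha$ and $\hat{\Gamma}_\alpha$ need not be negligible at the scale $\mathsf{P}\{\Gamma_\alpha\}=\Theta(\alpha^{-1/2})$ and the comparison of conditional laws would fail; the re-use of Theorem~\ref{Conditional version Billingsley} and the passage from $D^+$ to $C^+$ are by comparison routine.
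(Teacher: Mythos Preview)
Your proof is correct, but it follows a genuinely different line from the paper's. The paper never observes or uses the inclusion $\Gamma_\alpha\subseteq\hat{\Gamma}_\alpha$. Instead it first applies Theorem~\ref{Conditional version Billingsley} with the conditioning set $\hat{\Gamma}_\alpha$ to pass from $\hat{T}_\alpha$ to $T_\alpha$ and obtain \eqref{weak convergence T+}, i.e.\ $T_\alpha\mid\hat{\Gamma}_\alpha\Rightarrow B^+$; only afterwards does it change the conditioning from $\hat{\Gamma}_\alpha$ to $\Gamma_\alpha$ by a two-sided sandwich inequality built from Lemmas~\ref{Lemma 1 weak convergence} and~\ref{Lemma 2 weak convergence}. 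You reverse the order: first exploit the sample-path inclusion $\Gamma_\alpha\subseteq\hat{\Gamma}_\alpha$ (together with Lemma~\ref{Lemma 2 weak convergence}) to change the conditioning for $\hat{T}_\alpha$, and then apply Theorem~\ref{Conditional version Billingsley} with the conditioning set $\Gamma_\alpha$ to transfer to $T_\alpha$. Your route is arguably cleaner: on $\Gamma_\alpha$ both $\hat{T}_\alpha$ and $T_\alpha$ lie in $D^+$, so the use of Theorem~\ref{Conditional version Billingsley} stays entirely inside $D^+$, and the change of conditioning reduces to the single estimate $\mathsf{P}\{\hat{\Gamma}_\alpha\setminus\Gamma_\alpha\}=o(\mathsf{P}\{\Gamma_\alpha\})$ instead of a sandwich argument. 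The paper's approach, in exchange, does not depend on checking the pathwise inclusion and would survive in settings where the step approximation is not dominated at the renewal epochs. Your passage from $D^+$ to $C^+$ via Skorohod representation and the fact that $J_1$-convergence to a continuous limit is uniform is equivalent to the paper's appeal to Example~2.9 of \cite{Billingsley}.
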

\begin{proof}
For any set $A \in \mathcal{D}^+$ the following double inequality is true
\begin{align*}
&\mathsf{P}\{T_\alpha \in A, \hat{T}_\alpha \in D^+\} -\mathsf{P}\{\hat{T}_\alpha \in D^+,T_\alpha- \hat{T}_\alpha \notin D^+\} \\
&\leq \mathsf{P}\{T_\alpha \in A, T_\alpha \in D^+\} \leq \mathsf{P}\{T_\alpha \in A, \hat{T}_\alpha \in D^+\}+\mathsf{P}\{T_\alpha \in D^+, \hat{T}_\alpha- T_\alpha \notin D^+\}.
\end{align*}
Thus
\begin{align*}
&(\mathsf{P}\{T_\alpha \in A| \hat{T}_\alpha \in D^+\} -\mathsf{P}\{T_\alpha- \hat{T}_\alpha \notin D^+|\hat{T}_\alpha \in D^+\})\frac{\mathsf{P}\{\hat{T}_\alpha \in D^+\}}{\mathsf{P}\{T_\alpha \in D^+\}} \\
&\leq \mathsf{P}\{T_\alpha \in A| T_\alpha \in D^+\} \leq \mathsf{P}\{T_\alpha \in A|\hat{T}_\alpha \in D^+\}\frac{\mathsf{P}\{\hat{T}_\alpha \in D^+\}}{\mathsf{P}\{T_\alpha \in D^+\}}+\mathsf{P}\{\hat{T}_\alpha- T_\alpha \notin D^+|T_\alpha \in D^+\}
\end{align*}
By taking the limit as $\alpha \to +\infty$, as a consequence of \eqref{weak convergence T+}, Lemma \ref{Lemma 1 weak convergence} and Lemma \ref{Lemma 2 weak convergence},
\begin{align*}
&\mathsf{P}\{B^+ \in A\} -\delta_0(D\setminus D^+) \leq \lim_{\alpha \to +\infty}\mathsf{P}\{T_\alpha \in A| T_\alpha \in D^+\} \leq \mathsf{P}\{B^+ \in A \}+\delta_0(D\setminus D^+),
\end{align*}
for any $A \in \mathcal{D}^+$ such that $\mathsf{P}\{B^+ \in \partial A\} = 0$. Thus, we obtain that $T^+_\alpha \Rightarrow B^+$ on $D^+$. Finally, by denoting with $C^+ = \{x \in C: x \geq 0\}$, from the fact that $\mathsf{P}\{T^+_\alpha \in C^+\} \equiv \mathsf{P}\{B^+ \in C^+\} = 1$, example 2.9 of \cite{Billingsley}, shows that $T^+_\alpha \Rightarrow B^+$ on $C^+$, from which the thesis can be achieved.
\end{proof}
\section{Conditioned distribution of the telegraph meander}
Here we derive the distribution of telegraph meander at time $t$ conditioned on the number of Poisson events. From \eqref{symmetry property} and the fact that
\begin{align*}
&\mathsf{P}\{\min_{0\leq s\leq t}T(s) = 0|N(t) = n,V(0) = c\} = \mathsf{P}\{\min_{0\leq s\leq t}T(s) \geq 0|N(t) = n,V(0) = c\}, \;\; n\in \mathbb{N}_0
\end{align*}
formulas (4.4) and (4.10) of \cite{CO 2021} can be restated as
\begin{align}\label{Maximum conditioned atom}
&\mathsf{P}\{\min_{0\leq s\leq t}T(s)\geq 0|N(t) = n,V(0) = c\} = 
\begin{cases}
1 &n=0\\
\frac{\binom{2k}{k}}{2^{2k}} &n = 2k,\;\; k \in \mathbb{N}\\
\frac{\binom{2k+1}{k}}{2^{2k+1}} &n = 2k+1,\;\; k \in \mathbb{N}_0.
\end{cases}
\end{align}
Moreover, from equations \eqref{joint pos vel 1} and \eqref{joint pos vel 2}, setting $\beta = 0$, we can write
\begin{align}\label{joint conditioned distribution telegraph with minimum}
&\mathsf{P}\{T(t) \in dx,\min_{0\leq s\leq t}T(s)\geq 0|N(t) = n,V(0) = c\} \nonumber \\
&=\begin{cases}
\mathsf{1}_{\{ct\}}(x)dx &n=0\\
\frac{(2k)!}{k!(k-1)!}\frac{(c^2t^2-x^2)^{k-1}}{(2ct)^{2k}}2x\mathsf{1}_{[0,ct)}(x)dx &n = 2k, \;\;k \in \mathbb{N}\\
\binom{2k+1}{k}\frac{(c^2t^2-x^2)^{k}}{(2ct)^{2k+1}}\frac{ct+(2k+1)x}{ct+x}\mathsf{1}_{[0,ct)}(x)dx &n = 2k+1, \;\;k \in \mathbb{N}_0
\end{cases}
\end{align}
By combining \eqref{Maximum conditioned atom} and \eqref{joint conditioned distribution telegraph with minimum}, we obtain
\begin{align}\label{Telegraph meander conditioned 1}
&\mathsf{P}\{T^+(t) \in dx|N(t) = n\} 
=\begin{cases}
\mathsf{1}_{\{ct\}}(x)dx &n=0\\
2kx\frac{(c^2t^2-x^2)^{k-1}}{(ct)^{2k}}\mathsf{1}_{[0,ct)}(x)dx, &n = 2k,\;\; k \in \mathbb{N}\\
\frac{ct+(2k+1)x}{(ct)^{2k+1}}\frac{(c^2t^2-x^2)^{k}}{ct+x}\mathsf{1}_{[0,ct)}(x)dx &n=2k+1,\;\; k \in \mathbb{N}_0. 
\end{cases}
\end{align}
It is immediate to see that $\mathsf{P}\{T^+(t) \in dx|N(t) = n\} \to 0$ pointwise on $[0,ct)$ as $n \to +\infty$. This can be interpreted by saying that, as the number of changes of directions increases, with high probability, the particle does not move away from the origin. Moreover, as simple calculations show, the point of maximum of \eqref{Telegraph meander conditioned 1}, is equal to
\begin{equation}\label{Mode}
\mathsf{M}[T^+(t)|N(t) = n] = 
\begin{cases}
ct &n=0\\
\frac{ct}{\sqrt{2k-1}}, &n = 2k,\;\; k \in \mathbb{N}\\
ct\frac{\sqrt{2k+2}-1}{2k+1} &n=2k+1,\;\; k \in \mathbb{N}_0. 
\end{cases}
\end{equation}
It is worthwhile to observe that
\begin{align}
&\mathsf{P}\{T^+(t) \in dx|N(t) = 2k\}= -\frac{\partial}{\partial x}\frac{(c^2t^2-x^2)^{k}}{(ct)^{2k}} \;\;k \in \mathbb{N}\label{Telegraph meander conditioned even}\\
&\mathsf{P}\{T^+(t) \in dx|N(t) = 2k+1\}=-\frac{\partial}{\partial x}\frac{(c^2t^2-x^2)^{k}(ct-x)}{(ct)^{2k+1}} \;\;k \in \mathbb{N}_0\label{Telegraph meander conditioned odd}
\end{align}
hence, we can easily check that the laws in \eqref{Telegraph meander conditioned 1} are proper density functions. Furthermore, it is possible to derive the distribution function of the conditioned telegraph meander at the end time $t$. For $k \in \mathbb{N}$
\begin{align}
\mathsf{P}\{T^+(t) \leq x|N(t) = 2k\} =\begin{cases}
0 &x< 0\\
1-\Big(1-\frac{x^2}{c^2t^2}\Big)^{k} &0\leq x<ct \\ 
1 &x \geq ct
\end{cases} 
\end{align}
while, for $k \in \mathbb{N}_0$
\begin{align}
\mathsf{P}\{T^+(t) \leq x|N(t) = 2k+1\} =\begin{cases}
0 &x< 0\\
1-\Big(1-\frac{x^2}{c^2t^2}\Big)^{k}\Big(1-\frac{x}{ct}\Big) &0\leq x<ct \\ 
1 &x \geq ct
\end{cases}
\end{align}
From these formulas, we see that the following property holds
\begin{equation}\label{stochastic dominance}
\mathsf{P}\{T^+(t) \geq x|N(t) = 2k+1\} \leq \mathsf{P}\{T^+(t) \geq x|N(t) = 2k\}.
\end{equation}
The previous inequality has the following meaning. The condition that the motion never crosses the zero level requires that the velocity at the initial time must be positive. This implies that, if the motion changes direction an even number of times during the time interval $[0,t]$, the velocity at the end time is positive, while it is negative in the case of and odd number of changes of directions. Thus, the stochastic dominance arises from the fact that the particle is more likely to cross the positive level $x$ when the velocity changes from negative to positive.

We now compute the conditional moments of the telegraph meander.
\begin{proposition}\label{proposition Telegraph meander conditioned moment}
Let $T^+ = (T^+(s))_{s \in [0,t]}$ be a telegraph meander, then for any $p>0$, $n \in \mathbb{N}_0$
\begin{align}\label{Telegraph meander conditioned moments}
&\mathsf{E}[T^+(t)^{p}|N(t) = n] =
\begin{cases}
(ct)^{p} &n=0\\
(ct)^{p}k!\frac{\Gamma(\frac{p}{2}+1)}{\Gamma(\frac{p}{2}+1+k)} &n = 2k,\;  k \in \mathbb{N} \\
(ct)^{p}k!\Big(\frac{\Gamma(\frac{p}{2}+1)}{\Gamma(\frac{p}{2}+1+k)}-\frac{\frac{p}{2}\Gamma(\frac{p+1}{2})}{\Gamma(\frac{p+1}{2}+1+k)}\Big) &n=2k+1,\; k \in \mathbb{N}_0
\end{cases}
\end{align}
\end{proposition}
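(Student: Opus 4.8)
The plan is to read off the moments directly from the explicit conditional densities in \eqref{Telegraph meander conditioned 1}, making essential use of the fact, recorded in \eqref{Telegraph meander conditioned even} and \eqref{Telegraph meander conditioned odd}, that for $n\geq 1$ these densities are perfect spatial derivatives. For $n=0$ there is nothing to prove: conditionally on $N(t)=0$ the initial (positive) velocity never changes, so $T^+(t)=ct$ almost surely and $\mathsf{E}[T^+(t)^p|N(t)=0]=(ct)^p$.

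For $n=2k$ with $k\in\mathbb{N}$, I would write, using \eqref{Telegraph meander conditioned even},
\[
\mathsf{E}[T^+(t)^p|N(t)=2k]=-\int_0^{ct}x^p\,\frac{\partial}{\partial x}\frac{(c^2t^2-x^2)^k}{(ct)^{2k}}\,dx,
\]
and integrate by parts. The boundary contributions vanish for every $p>0$: at $x=ct$ because $(c^2t^2-x^2)^k$ does, and at $x=0$ because $x^p\to 0$. What is left is $p\int_0^{ct}x^{p-1}(c^2t^2-x^2)^k(ct)^{-2k}\,dx$, and the rescaling $x=ctu$ turns it into $p\,(ct)^p\int_0^1 u^{p-1}(1-u^2)^k\,du$. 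The substitution $w=u^2$ identifies this last integral with $\frac{\Gamma(p/2)\,k!}{2\,\Gamma(p/2+k+1)}$, and the elementary identity $\frac{p}{2}\Gamma\!\left(\frac{p}{2}\right)=\Gamma\!\left(\frac{p}{2}+1\right)$ then produces exactly the stated expression.

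For $n=2k+1$ with $k\in\mathbb{N}_0$ the same chain of steps applied to \eqref{Telegraph meander conditioned odd} gives $p\,(ct)^p\int_0^1 u^{p-1}(1-u^2)^k(1-u)\,du$; splitting off the factor $1-u$ yields the difference $\frac{\Gamma(p/2)\,k!}{2\,\Gamma(p/2+k+1)}-\frac{\Gamma((p+1)/2)\,k!}{2\,\Gamma((p+1)/2+k+1)}$, and the same Gamma-function bookkeeping gives the claimed formula. The endpoint $k=0$, where \eqref{Telegraph meander conditioned odd} reduces to the uniform law on $[0,ct)$, is covered by these computations without modification, and one checks that the formula then collapses to $(ct)^p/(p+1)$.

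There is no genuine obstacle here; the points that need care are purely of a bookkeeping nature --- checking that the integration-by-parts boundary terms vanish for all real $p>0$ rather than merely for integers, keeping track of the factor $\frac{1}{2}$ introduced by the substitution $w=u^2$ in the Beta-type integrals, and applying $\frac{p}{2}\Gamma(\frac{p}{2})=\Gamma(\frac{p}{2}+1)$ to pass from $p\,\Gamma(p/2)/2$ to $\Gamma(p/2+1)$.
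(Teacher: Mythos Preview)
Your proposal is correct and follows essentially the same route as the paper: it uses the derivative representations \eqref{Telegraph meander conditioned even}--\eqref{Telegraph meander conditioned odd}, integrates by parts, and evaluates the resulting Beta-type integrals. The only cosmetic difference is that the paper performs the substitution $x\mapsto x^2/(ct)^2$ in one step rather than your two-step rescaling $x=ctu$ followed by $w=u^2$.
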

\begin{proof}
For the even case, with formula \eqref{Telegraph meander conditioned even} at hand, integration by parts gives
\begin{align*} 
&\mathsf{E}[T^+(t)^{p}|N(t) = 2k] = -\int_0^{ct}x^p\frac{\partial}{\partial x}\frac{(c^2t^2-x^2)^{k}}{(ct)^{2k}}dx = \int_0^{ct}px^{p-1}\frac{(c^2t^2-x^2)^{k}}{(ct)^{2k}}dx \\&=\frac{p}{2}(ct)^{p}\int_0^{1}x^{\frac{p}{2}-1}(1-x)^{k}dx = (ct)^{p}k!\frac{\Gamma\Big(\frac{p}{2}+1\Big)}{\Gamma(\frac{p}{2}+1+k)}.
\end{align*}
In the same way, for the odd case, by means of \eqref{Telegraph meander conditioned odd}, we have
\begin{align*}
&\mathsf{E}[T^+(t)^{p}|N(t) = 2k+1] = -\int_0^{ct}x^p\frac{\partial}{\partial x}\frac{(c^2t^2-x^2)^{k}(ct-x)}{(ct)^{2k+1}}dx = \int_0^{ct}px^{p-1}\frac{(c^2t^2-x^2)^{k}(ct-x)}{(ct)^{2k+1}}dx \\&=\frac{p}{2}(ct)^{p}\bigg(\int_0^{1}x^{\frac{p}{2}-1}(1-x)^{k}dx-\int_0^{1}x^{\frac{p-1}{2}}(1-x)^{k}dx\bigg)=(ct)^{p}k!\bigg(\frac{\Gamma(\frac{p}{2}+1)}{\Gamma(\frac{p}{2}+1+k)}-\frac{\frac{p}{2}\Gamma(\frac{p+1}{2})}{\Gamma(\frac{p+1}{2}+1+k)}\bigg).
\end{align*}
\end{proof}
We remark that the difference between the two moments for the even and odd cases tends to be negligible when $n$ is large enough. This indicates that, when the motion changes directions many times, the position is not significantly affected by whether the number of changes of direction is even or odd. Moreover, by applying Stirling's formula to \eqref{Telegraph meander conditioned moments}, it turns out that $n^{p/2}\mathsf{E}[T(t)^{p}|N(t) = n] \to 0$. Therefore, the telegraph meander conditioned on the event $N(t)=n$, converges in $L^p$ to $0$ for any $p>0$ as $n \to +\infty$. And hence this tells us that, when the number of Poisson events is large, on average, the motion concentrates around the origin.


\begin{thebibliography}{9}
\bibitem{BNO 2001} L. Beghin, L. Nieddu, E. Orsingher, Probabilistic analysis of the telegrapher’s process with drift by means of relativistic transformations, \textit{J. Appl. Math. Stochastic Anal.} 92 (2001) 11-25.
\bibitem{Billingsley} P. Billingsley, \textit{Convergence of probability measures}, Second edition, John Wiley \& Sons, Inc., New York, 1999.
\bibitem{C 2020} F. Cinque, The negative reflection principle and the joint distribution of the telegraph process and its maximum. Available at arXiv:2011.00342.
\bibitem{CO 2021} F. Cinque, E. Orsingher, On the distribution of the maximum of the telegraph process, \textit{Theor. Probability and Math. Statist.} 102 (2021) 73–95.
\bibitem{C 2023} F. Cinque, Reflection principle for finite-velocity random motions, \textit{J. Appl. Prob.} 60 (2023) 479–492.
\bibitem{CJ 1997} M. Chesney, M. Jeanblanc-Picqué, M. Yor, Brownian Excursions and Parisian Barrier Options, \textit{Adv. Appl. Prob.} 29(1) (1997) 165–184. 
\bibitem{DGO 2004} A. De Gregorio, E. Orsingher, L. Sakhno, Motions with finite velocity analyzed with order statistics and differential equations, \textit{Theor. Probability and Math. Statist.} 71 (2004) 63–7.
\bibitem{DgI 2021} A. De Gregorio, F. Iafrate, Telegraph random evolutions on a circle, \textit{Stochastic Process. Appl.} 141 (2021) 79-108.
\bibitem{Dc 2001} A. Di Crescenzo, On random motions with velocities alternating at Erlang-distributed random times, \textit{Adv. in Appl. Probab.} 33 (2001), 690-701.
\bibitem{DcP 2002} A. Di Crescenzo, F. Pellerey, On prices’ evolutions based on geometric telegrapher’s process, \textit{Applied Stochastic Models in Business and Industry} 18 (2002) 171–184.
\bibitem{DI 1977} R. T. Durrett, D. L. Iglehart, D. R. Miller, Weak convergence to Brownian meander and Brownian excursion. \textit{Ann. Probab.}, \textbf{5}(1) (1977) 117-129.
\bibitem{EK 1986}S.N. Ethier, T.G. Kurtz, T.G. \textit{Markov Processes: Characterization and Convergence}, in: Wiley Series in Probability and Mathematical Statistics, Wiley, New York, (1986).
\bibitem{G 1951}S. Goldstein, On diffusion by discontinuous movements and the telegraph equation, \textit{Quart. J. Mech. Appl. Math.} 4 (1951) 129–156.
\bibitem{HM 2020}A. K. Hartmann, S. N. Majumdar, H. Schawe, G. Schehr, The convex hull of the run-and-tumble particle in a plane. \textit{Journal of Statistical Mechanics: Theory and
Experiment} 2020, 053401.
\bibitem{I 1974}D. L. Iglehart, Functional central limit theorems for random walks conditioned to stay positive. \textit{Ann. Probab.} 2(4) (1974) 608-619, 08.
\bibitem{MADlB2012} K. Mertens, L. Angelani, R. Di Leonardo, L. Bocquet, Probability distributions for the run-and-tumble bacterial dynamics: An analogy to the Lorentz model. \textit{The European Physical Journal} 35 (2012) 84.
\bibitem{O 1990}E. Orsingher (1990). Probability law, flow function, maximum distribution of wave-governed random motions and their connections with Kirchoff’s laws, \textit{Stochastic Process. Appl.} 34 (1990) 49–66.
\bibitem{O 1995}E. Orsingher, Motions with reflecting and absorbing barriers driven by the telegraph equation, \textit{Random Operators Stoch. Equat.} 3 (1995) 9–21.
\bibitem{R 1999}N. Ratanov, Telegraph evolutions in inhomogeneous media, \textit{Markov Process. Related Fields} 5 (1) (1999) 53-68.
\bibitem{R 2007}N. Ratanov, N., 2007. Jump telegraph processes and financial markets with memory. \textit{Journal of Applied Mathematics and Stochastic Analysis} (2007)
\bibitem{R 2021} N. Ratanov, On telegraph processes, their first passage times and running extrema. \textit{Statist. Probab. Lett.}  174 (2021) 109101.
\bibitem{TDc 2018}F. Travaglino, A. Di Crescenzo, B. Martinucci, R. Scarpa, A new model of Campi Flegrei inflation and deflation episodes based on Brownian motion driven by the telegraph process. \textit{Mathematical Geosciences} 50 (2018) 961–975.
\end{thebibliography}
\end{document}